\newtheorem{thm}{Theorem}[section]
\newtheorem{theorem}[thm]{Theorem}
\newtheorem{cor}[thm]{Corollary}
\newtheorem{prop}[thm]{Proposition}
\newtheorem{lemma}[thm]{Lemma}
\theoremstyle{remark}
\newtheorem{hyp}[thm]{Induction Hypothesis}
\theoremstyle{definition}
\numberwithin{equation}{section}
\newcommand{\Hom}{\mathrm{Hom}}
\newcommand{\Alt}{{\raise 2pt\hbox{$\scriptstyle\bigwedge$}}}
\newcommand{\longlongrightarrow}{\relbar\joinrel\relbar\joinrel\rightarrow}
\newcommand{\longlonglongrightarrow}{\relbar\joinrel\relbar\joinrel\relbar\joinrel\rightarrow}
\newcommand{\THH}{\mathrm{THH}}
\newcommand{\Nil}{\mathrm{Nil}}
\newcommand{\TRM}{\mathcal{ T}_R(M)}
\newcommand{\TpRM}{\mathcal{ T}^{\pi}_R(M)}
\newcommand{\PR}{\mathcal{ P}_R}
\newcommand{\mR}{\mathcal{ M}_R}
\newcommand{\PS}{\mathcal{ P}_S}
\newcommand{\MS}{\mathcal{ M}_S}
\newcommand{\sphere}{{\mathbb{S}}}
\renewcommand{\u}{\underline}
\newcommand{\hocolim}{\mathrm{hocolim}}
\newcommand{\Map}{{\u{\mathrm{Map}}}}
\begin{document}
\title[K-theory of Formal Power Series]
{On the algebraic K-theory of formal power series}

\author{Ayelet Lindenstrauss}
\email{alindens@indiana.edu}
\address{Department of Mathematics\\
    Indiana University \\
    Bloomington, IN 47405\\
    U.S.A.}

\author{Randy McCarthy}
\email{randy@math.uiuc.edu}
\address{Department of Mathematics\\
    University of Illinois at Urbana-Champaign \\
    Urbana, IL 61801\\
    U.S.A.}

\thanks{}

\begin{abstract}
For $R$ a discrete ring, and $M$ a simplicial $R$-bimodule, we let $\TRM$ denote the (derived) tensor algebra of $M$ over $R$, and $\mathcal{ T}^{\pi}_R(M)$ denote the ring of formal (derived) power series in $M$ over $R$.  We define a natural transformation of simplicial $R$-bimodules
$\Phi:\Sigma\tilde K(R;\ \ )\to \tilde K(\mathcal{ T}^{\pi}_R(\ \ ))$
which is closely related to Waldhausen's equivalence 
$\tilde K(\Nil(R;\ \ ))\stackrel{\simeq}{\to}\tilde K(\mathcal{ T}_R(\ \ )).$
We show that $\Phi$ induces an equivalence on any finite stage of the Goodwillie Taylor towers of the functors at any simplicial bimodule.  This is used to show that there is an equivalence of functors
$\Sigma W(R;\ \ )\stackrel{\simeq}{\to} 
{\rm holim}_n \tilde K(\mathcal{ T}_R(\ \ )/I^{n+1}),$
where $W(R;\ \ )$ is what the Goodwillie Taylor tower of $\tilde K(R;\ \ )$ converges to, 
and for connected bimodules, also an equivalence
$\Sigma \tilde K(R;\ \ )\stackrel{\simeq}{\to} \tilde K(\mathcal{ T}_R(\ \ )).$

Read in the opposite direction, the equivalence on the Taylor towers gives us the values that  the finite stages of the Goodwillie Taylor towers of the functor of augmented $R$-algebras $A\mapsto\tilde K(A)$
take on augmented algebras which are of the form $\TRM$ for a connected $R$-bimodule $M$.
\end{abstract}

\subjclass[2000]{19D50 ; 19D35, 19D55, 16E20}

\keywords{Algebraic K-theory, K-theory of Endomorphisms, Goodwillie Calculus,  Formal Power Series, Tensor Algebra}

\maketitle

%\newpage

\section{Introduction}
Throughout this paper, let $R$ be a unital ring, and let $M$ be a simplicial $R$ bi-module.  We will relate the algebraic K-theory of parametrized endomorphisms (the K theory of the category whose objects are pairs $(P,f)$ with $P$ a f.g. projective right $R$-module and $f:P\to P\otimes_R M$  a map of right $R$ modules, with maps being maps of the modules $P$ which induce commutative diagrams) with 
the algebra $\mathcal{ T}^{\pi}_R(M)$ of formal (derived) power series in $M$ 
over $R$ (and in the case of connected bimodules, with the algebraic K-theory of the (derived) tensor algebra $\TRM$ which is weakly equivalent to it).

The idea for the map we use to relate them came from Waldhausen \cite{Wald1}, where he defines an equivalence
$$\Sigma \tilde K(\Nil(R;\ \ ))\stackrel{\simeq}{\to}\tilde K(\mathcal{ T}_R(\ \ ))$$
One can model his $\tilde K(\Nil(R;M ))$ (see also \cite{B}) as the algebraic K-theory of the full subcategory of the category we used to define $K(R;M)$ which consists of modules $P$ and maps
$m:P\to P\otimes_R M$ which are nilpotent, that is, for every $p\in P$ some power $m^{\otimes_Ri}$ vanishes on $p$ (see equation (\ref{tensorsmean}) below for the meaning of $m^{\otimes_Ri}$) .  In these terms, for a nilpotent map $m:P\to P\otimes_R M$, Waldhausen's equivalence sends
$$m\mapsto (1-m)^{-1} = \Sigma_{i=0}^{\infty} m^{\otimes_Ri}$$
where the latter is extended $\TRM$-linearly to be viewed as a map from $P\otimes_R\TRM$ to itself, and the infinite sum makes sense because at every point in the domain of the map, the infinite sum is in fact finite.  This suggests that it would be interesting to look at this map $m\mapsto (1-m)^{-1}$ defined on the full $\tilde K(R;M)$.  Of course, the map would not be able to land in $ \tilde K(\mathcal{ T}_R(M))$ anymore because of the problem of the convergence of the infinite sum, but would land in some sort of (non-commutative, unless $R$ is commutative and $M$ is symmetric with a single generator) localization of it which inverts elements of the form $1-m$ from $P\otimes_R\TRM$ to itself, and the idea would be that one would get a diagram
\begin{equation*}
\xymatrix{
\Sigma \tilde K\ar[d]^{{\rm incl}_*}
(\Nil(R;M )) 
\ar[r]^{{\rm Waldhausen}} &
\Sigma \tilde K\ar[d]^ {}(\mathcal{ T}_R(M))\\
\tilde K(R;M ) 
\ar[r]^{} &
{\rm Appropriate\ \ localization\ \  of\ \ } \tilde K(\mathcal{ T}_R(M))\\}
\end{equation*}
 with the horizontal maps being weak equivalences.  
 %We want to define the downgoing map in the left column.

\medskip
Betley \cite{B} began this program by showing that when $R$ is a field and $M$ a discrete $R$-bimodule, the invariant we call $\tilde K(R;M ) $ is a localization in the sense of Neeman and Ranicki \cite{NR} of $\tilde K(\mathcal{ T}_R(M))$, coming from inverting maps of the form $1-m$ in the category of finitely generated projective $\TRM$ modules.
 
We will instead extend Waldhausen's map by looking at formal power series rather than at the tensor algebras.  Thus for any unital ring $R$, we define a natural transformation
of simplicial $R$-bimodules
$$\Phi:\Sigma\tilde K(R;\ \ )\to \tilde K(\mathcal{ T}^{\pi}_R(\ \ ))$$
and study its behavior on the Taylor towers in the sense of Goodwillie.  The Goodwillie Taylor tower of the parametrized endomorphisms $\Sigma\tilde K(R;M)$ can be described (see \cite{LMcC1}) as follows:  we look at circular derived tensor products of $i$ copies of $M$,
 
 \hbox{ \begin{pspicture}(-3,-2)(1.5,1.5)
    \psset{linestyle=none}
    \pstextpath[c]{\psarcn(0,0){1.2}{0}{10}}
               {$\cdots$}
    \pstextpath[c]{\psarcn(0,0){1.2}{30}{40}}
               {$\hat\otimes_R$}
     \pstextpath[c]{\psarcn(0,0){1.2}{60}{70}}
               {$M$}
    \pstextpath[c]{\psarcn(0,0){1.2}{90}{100}}
               {$\hat\otimes_R$}
     \pstextpath[c]{\psarcn(0,0){1.2}{120}{130}}
               {$M$}
    \pstextpath[c]{\psarcn(0,0){1.2}{150}{160}}
               {$\hat\otimes_R$}
    \pstextpath[c]{\psarcn(0,0){1.2}{180}{190}}
               {$M$}
    \pstextpath[c]{\psarcn(0,0){1.2}{210}{220}}
               {$\hat\otimes_R$}
    \pstextpath[c]{\psarcn(0,0){1.2}{240}{250}}
               {$M$}
    \pstextpath[c]{\psarcn(0,0){1.2}{270}{280}}
               {$\hat\otimes_R$}
    \pstextpath[c]{\psarcn(0,0){1.2}{300}{310}}
               {$M$}
    \pstextpath[c]{\psarcn(0,0){1.2}{330}{340}}
               {$\hat\otimes_R$}
\end{pspicture}}
\noindent
where the cyclic group $C_i$ acts by rotation.  The $n$'th stage of the Goodwillie Taylor tower of 
$\Sigma\tilde K(R;M)$ is the homotopy inverse limit, taken over $i\leq n$, of all the $C_i$ fixedpoints in the circular derived tensor product of $i$ copies of $M$.  When $M=R$, this invariant was introduced in \cite{BHM} and called ${\rm TR}_n(R)$.

We obtain in Theorem \ref{mainone} below that $\Phi$ induces equivalences
\begin{equation}
\label{bothways}
{\Phi_n}:\Sigma W_n(R;\ \ )\stackrel{ \simeq}{\rightarrow}\ \ P_n\tilde K(\mathcal{ T}^{\pi}_R(\ \ ))
\end{equation}
for every $n$. When $M$ is a {\it connected} simplicial bimodule, both of these Goodwillie Taylor towers converge, 
%to the functor they were approximating, 
resulting in  Corollary \ref{conn} 
%below says that 
for connected $M$ 
%we get 
$$\Phi: \Sigma \tilde K(R;M )\stackrel{\simeq}{\to} \tilde K(\mathcal{ T}_R(M )).$$
This is in keeping with the philosophy of extending Waldhausen's map, as explained above, since when $M$ is connected the infinite sum $ \Sigma_{i=0}^{\infty} m^{\otimes_Ri}$ `homotopy converges'.  

For a general bimodule $M$, by comparing $\tilde K(\mathcal{ T}^{\pi}_R(\ \ ))$ to the more tractable functors  $\tilde K(\mathcal{ T}_R(\ \ )/I^{n+1})$, we obtain as Corollary \ref{notconn} the formula
$$\Sigma W(R;M)\stackrel{\simeq}{\to} 
{\rm holim}_n \tilde K(\mathcal{ T}_R(M)/I^{n+1}).$$
%a very direct description of a `localizaion'  of  $\tilde K(\TRM)$ that $\tilde K(R;M)$ is  equivalent to.

\medskip
Another motivation for the result of Corollary \ref{conn} comes from \cite{CCGH}, where Carlsson {\it et al.} show in Theorem 3 that for a simplicial space $X$,
$$A(\Sigma X)\simeq \Sigma \bigvee_n [(\sphere X)^{\wedge n}]_{hC_n}.$$
But by James Milnor splitting, for $X$ connected
$A(\Sigma X)=K(\Sigma^{\infty}(\Omega\Sigma X_+))\simeq K(T_\sphere \sphere X),$
while Tom-dieck splitting (as in \cite{BHM} for the case $R=M$, and more generally as in \cite{I}) gives 
$\Sigma W(\sphere, \sphere X) \simeq\Sigma \bigvee_n [(\sphere X)^{\wedge n}]_{hC_n}.$
In these terms, then, the \cite{CCGH} result could be written for connected $X$ as
$$\Sigma W(\sphere, \sphere X) \simeq K(T_\sphere \sphere X),$$
which is an FSP version (which we do not prove) of our result of Corollary \ref{conn} for $R=\sphere$ and $M=\sphere X$.

\medskip
Read in the opposite direction, if we are interested in understanding the K-theory of augmented simplicial $R$-algebras rather than the K-theory of endomorphisms, the equation (\ref{bothways}) proved in Theorem \ref{mainone} (applied to connected $M$ where $\mathcal{ T}^{\pi}_R(M)\simeq\TRM$) tells us the finite stages of the Goodwillie Taylor tower of the functor $M \mapsto \tilde K(\TRM)$ on simplicial $R$-bimodules (since the stages of the Goodwillie Taylor are determined by their values on connected spaces).  

It would be very interesting to know the Goodwillie Taylor tower of the functor $A\mapsto\tilde K(A)$ on the category of augmented simplicial $R$-algebras.  While that cannot be deduced from our result, it is interesting to note that the Goodwillie Taylor tower of a functor $F$ from augmented simplicial 
$R$-algebras to spectra, when applied to algebras of the form $\TRM$, coincides with the Goodwillie Taylor tower of the functor
$F(\mathcal{ T}_R(\ \ ))$ from  simplicial $R$-bimodules to spectra.  This is because the functor $M\mapsto\TRM$ from $R$-bimodules to augmented $R$-algebras sends the initial and final object $0$  to the initial and final object $R$, coproducts to coproducts, and more generally: co-Cartesian cubes to co-Cartesian cubes. Recall (see  \cite{Calc3}) that  Goodwillie constructs $P_nF(X)=\hocolim_i( T_n^i  F)(X)$, and the iterated maps $T_n$ involve taking homotopy limits of the functor in question over co-Cartesian diagrams of coproducts of $X$ with the initial and final object, so this construction would be the same for $F$ on $R$-algebras and for $F(\mathcal{ T}_R(\ \ ))$ on $R$-bimodules.  Therefore, in this paper we determine the values that the finite Goodwillie Taylor approximations $P_n\tilde K$ take on augmented $R$-algebras which are of the form $\TRM$.
% with $M$ connected.

\medskip
In the case $M=R$, the results described above are older.  It has been known
from work by Grayson \cite{Gr} in 1977, at least at the level of homotopy groups, that the K-theory of endomorphisms
$$K(R;R)\simeq \Omega \tilde K((1+xR[x])^{-1} R[x]),$$
where the localization was straightforward because it is done on the level of the underlying commutative ring, and of course $R[x]$ is the same thing as $T_R(R)$.
For $M=R$, the special version
$$TR(R)\simeq{\rm holim}_n \Omega\tilde K(R[x]/(x)^{n+1}$$
of our Corollary \ref{notconn} was proved by Hesselholt \cite{H} in the commutative case, and follows from the work of Betley and Schlichtkrull  in \cite{BS} for general $R$.

\section{Preliminaries}
For $R$ a unital ring  and $M$ an $R$-bimodule we can look at the tensor algebra (with derived tensor products) of $M$ over $R$,
$$\mathcal{ T}_R(M)=R\oplus M\oplus M\!\!\otimes^\wedge_R\!\!M\oplus M\!\!\otimes^\wedge_R\!\!M\!\!\otimes^\wedge_R\!\!M\oplus \cdots.$$
Then $\mathcal{ T}_R(M)$ is an augmented $R$-algebra, and we call its augmentation ideal $I$.
Note that if $M$ is an $R$-bimodule which is flat either as a right $R$-module or as a left $R$-module,
then the tensoring down map $M^{\otimes^\wedge_Rn}\to M^{\otimes _R n}$ is a weak equivalence for every $n$.  This makes $\TRM$ weakly equivalent to the usual tensor algebra for such $M$, which we can denote by $T_R(M)$.

\medskip
We let $\mathcal{P}_R$ denote the category of projective finitely generated right $R$-modules, and $\mathcal{M}_R$ denote the category of finitely generated right $R$-modules.  For an augmented $R$-algebra $A\stackrel{\eta}{\rightarrow}R$ with augmentation ideal $I$ and an element $P\in \mathcal{P}_R$, we will set
$$I_P(A)=\Hom_{\mathcal{M}_R}(P, P\otimes_R I)\cong\ker(\Hom_{\mathcal{M}_R}(P, P\otimes_R A)\stackrel{\eta_*}{\rightarrow}
\Hom_{\mathcal{M}_R}(P, P)).$$
Following the construction in section I.2.5 of \cite{DGMcC}, if we let $\mathbf{1}_P$ denote the identity element in $\Hom(P,P)$, we can view $\mathbf{1}_P+I_P(A)$ as
a subset of 
$\Hom_{\mathcal{M}_R}(P\otimes_RA, P\otimes_R A)$ as follows: for $\alpha\in I_P(A)$,
let
$$(\mathbf{1}_P+\alpha)(p\otimes a) = (p\otimes 1+\alpha(p))a=p\otimes a+\alpha(p)a.$$
Viewed inside $\Hom_{\mathcal{M}_R}(P\otimes_RA, P\otimes_R A)$, we can compose elements of
 $\mathbf{1}_P+I_P(A)$; applying this to $\mathbf{1}_P+\alpha$ and $\mathbf{1}_P+\beta$ will give the composition
\begin{equation}
\label{composition}
P\stackrel{\mathbf{1}_P+\alpha}{\longlongrightarrow}
P\otimes A \stackrel{(\mathbf{1}_P+\beta)\otimes\mathbf{1}_A }{\longlonglongrightarrow}
P\otimes A\otimes A \stackrel{\mathbf{1}_P\otimes\mathrm{mult}_A}{\longlonglongrightarrow}
P\otimes A.
\end{equation}
sending
$$p\mapsto p\otimes 1+\alpha(p)+\beta(p)+\beta(\alpha(p)),$$
where $\beta(\alpha(p))$ is interpreted using $A$-linearity as above.
Note that if $\alpha,\beta$ send $P$ to $P\otimes I$, so does $\alpha+\beta+\beta(\alpha)$, so 
 $\mathbf{1}_P+I_P(A)$ is closed under multiplication.  If there is some reason that  for any $\alpha\in I_P(A)$,  $\mathbf{1}_P +\alpha+\alpha^2+\cdots$ is defined (such as the augmentation ideal being nilpotent or the infinite sum converging for another reason), it is in fact a group.  We look at its classifying space.
 We can by \cite{DGMcC} model the reduced (over $R$) K-theory spectrum $\tilde K(A)$ using Waldhausen's S-construction
 $$\{ n\mapsto \bigvee_{P\in S^{(n)}\mathcal{P}_R} B_.( \mathbf{1}_P+I_P(A)) \}.$$
 We will be looking at the augmented $R$-algebras $\mathcal{ T}_R(M)/I^{n+1}$ and
$$\mathcal{ T}^{\pi}_R(M) = {\rm lim}_{n} \mathcal{ T}_R(M)/I^{n+1}.$$
They both satisfy the condition that  for any $P\in \mathcal{P}_R$ and $\alpha\in I_P(A)$,  $\mathbf{1}_P+\alpha+\alpha^2+\cdots$ is defined in $\Hom_R(P,P\otimes_R A)$.
 
\begin{prop}
\label{analytic}
 The functor $M\mapsto \tilde K(\mathcal{ T}_R(M)/I^{n+1})$, as a functor from simplicial
$R$-bimodules to spectra, 

1) commutes with realizations

2) satisfies the colimit axiom, that is: respects filtered colimits

3) preserves connectivity of maps

4) is -1-analytic

\smallskip
\noindent
so $\tilde K(\mathcal{ T}_R(M)/I^{n+1}) = {\rm holim}_k P_k\tilde K((\mathcal{ T}_R(M)/I^{n+1})$
for all simplicial $R$ bimodules $M$.
\end{prop}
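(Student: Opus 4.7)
The plan is to work with the explicit simplicial model
$$\tilde K(A) \simeq \bigl\{ n \mapsto \bigvee_{P \in S^{(n)}\PR} B_\bullet(\mathbf{1}_P + I_P(A)) \bigr\}$$
recalled just before the statement, since each ingredient interacts transparently with the four listed properties. We apply this with $A = \mathcal{T}_R(M)/I^{n+1}$, whose augmentation ideal $I$ is the finite direct sum of the derived tensor powers $M^{\otimes_R i}$ for $1 \leq i \leq n$.

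For (1), realization commutes with each of the finitely many derived tensor products that build $I$, with $\Hom_{\mR}(P, P \otimes_R -)$ for $P \in \PR$, with the bar construction $B_\bullet$ on simplicial groups, with the wedge indexed by $S^{(n)}\PR$ (a set that does not depend on $M$), and with passage to spectra via the $S$-construction. Property (2) is the identical argument with filtered colimits in place of realizations, since those same ingredients all commute with filtered colimits.

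For (3), a $k$-connected map $M \to M'$ induces $k$-connected maps $M^{\otimes_R i} \to (M')^{\otimes_R i}$ for $i \geq 1$ (derived tensor products of connective bimodules over the discrete ring $R$ preserve connectivity of maps), hence a $k$-connected map of augmentation ideals; this connectivity passes through $\Hom_{\mR}(P, P \otimes_R -)$, through $B_\bullet(\mathbf{1}_P + -)$ (which can only increase it), and through the wedge over $P$.

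Property (4) is the main obstacle. I would argue $-1$-analyticity by factoring as the composite of $M \mapsto \mathcal{T}_R(M)/I^{n+1}$ from bimodules to augmented simplicial $R$-algebras, followed by $\tilde K$. The first functor is polynomial of degree $n$ (its augmentation ideal is a degree-$n$ polynomial functor of $M$), hence sends a strongly co-Cartesian cube with $k_s$-connected edges to a cube with the needed connectivity bounds. For the second, $\tilde K$ of augmented simplicial $R$-algebras is $-1$-analytic: in the model above, for each fixed $P$, $\mathbf{1}_P + I_P(A)$ is an affine translate of the linear functor $\Hom_{\mR}(P, P \otimes_R I)$, on which $B_\bullet$ satisfies the Blakers--Massey-type estimates needed, while the only nonlinearity of the group law in~(\ref{composition}) is quadratic. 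Composing the two factors gives $-1$-analyticity of $M \mapsto \tilde K(\mathcal{T}_R(M)/I^{n+1})$, and the final formula $\tilde K(\mathcal{T}_R(M)/I^{n+1}) \simeq {\rm holim}_k\, P_k \tilde K(\mathcal{T}_R(M)/I^{n+1})$ then follows from the fact that $-1$-analytic functors have convergent Taylor towers on $(-1)$-connected inputs, together with the observation that every simplicial $R$-bimodule is $(-1)$-connected.
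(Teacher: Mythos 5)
Parts (1)--(3) of your argument are fine and consistent with what the paper does (the paper simply cites \cite{DGMcC} for realizations and \cite{Wald} for the colimit axiom and connectivity of $K$-theory of simplicial rings, combined with the direct observation that $M\mapsto \mathcal{T}_R(M)/I^{n+1}$ has these properties; your hands-on verification through the explicit model amounts to the same thing).

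Part (4), which you correctly identify as the main obstacle, has a genuine gap, in two places. First, you invoke ``$\tilde K$ of augmented simplicial $R$-algebras is $-1$-analytic'' as if it followed from the remark that $\mathbf{1}_P+I_P(A)$ is an affine translate of a linear functor with quadratic group law. That statement is a substantial theorem, not a one-line observation: establishing the required Cartesian-ness estimates for the cubes $B_\bullet(\mathbf{1}_P+I_P(-))$ is exactly the technical content of Proposition 3.2 of \cite{ACTA}, and the paper does not assume it in this generality --- it only uses the case of the specific functor at hand. Second, even granting that input, the composition step does not go through as stated: analyticity is tested on \emph{strongly} co-Cartesian cubes (every $2$-face co-Cartesian), and the degree-$n$ polynomial functor $M\mapsto \mathcal{T}_R(M)/I^{n+1}$ does not carry strongly co-Cartesian cubes of bimodules to strongly co-Cartesian cubes of augmented algebras; it only produces cubes that are co-Cartesian to an extent depending on $|S|$ and the connectivities $k_s$. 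So you cannot simply feed its output into the analyticity of $\tilde K$; you must prove a quantitative estimate for the composite directly. This is what the paper does: after reducing by resolutions and the colimit axiom to $X\mapsto \tilde K(\mathcal{T}_R(\tilde R[X_+])/I^{n+1})$, it shows by induction on $n$ that for a strongly co-Cartesian $S$-cube $\mathcal{X}$ the cube $B_\bullet(\mathbf{1}_P+I_P(\mathcal{T}_R(\tilde R[\mathcal{X}_+])/I^{n+1}))$ is $2(|S|-1)+\Sigma k_s$ co-Cartesian, using the extension of groups whose bar constructions give a fibration of cubes with base the case $n-1$ and fiber $\Hom_{\mathcal{M}_R}(P,P)\otimes_{\mathbf{Z}}\tilde{\mathbf{Z}}[(\Sigma\mathcal{X})_+^{n}]$, the latter handled by Example 4.4 of \cite{Calc2} and the base case $n=1$ by \cite{ACTA}. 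Your proposal is missing this inductive estimate, which is the heart of the proof. (Your final step --- that $-1$-analyticity yields convergence of the Taylor tower on all simplicial bimodules since these are $(-1)$-connected --- is correct.)
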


\bigskip\noindent
\begin{proof} Condition 1) follows from chapter III of \cite{DGMcC}. Conditions 2) and 3) follow from the facts
that by direct observation these properties are true for the functor
$M\mapsto\mathcal{ T}_R(M)/I^{n+1}$
and by \cite{Wald} they are true for the algebraic $K$-theory of simplicial rings. 

By taking resolutions if necessary and using the colimit axiom, to show 4)
it suffices to show that the functor of spaces
$$X\mapsto \tilde K(\mathcal{ T}_R(\tilde R[X_+])/I^{n+1})$$
is -1--analytic. To do this we'll follow the process done for the case $n=1$
in \cite{ACTA} (Proposition 3.2) which is essentially a modification of work 
by Goodwillie in \cite{Calc2}. 

Let $\mathcal{ X}$ be a strongly co-Cartesian $S$-cube of spaces. We may assume that
the natural maps are inclusions of sub-simplicial sets. Suppose that the maps
$\mathcal{ X}(\emptyset)\to  \mathcal{ X}(\{s\})$ are $k_s$--connected for each $s\in S$. We wish to show
that the stabilization of the cube of functors
\begin{multline*}
\bigvee_{P\in\mathcal{ P}_R}\!\! B.( \mathbf{1}_P+I_P(\mathcal{ T}_R(\tilde R[X_+])/I^{n+1})) 
\\
\cong \!\!\!\bigvee_{P\in \mathcal{ P}_R}\!\!\! B.( \mathbf{1}_P+{\rm Hom}_{\mathcal{ M}_R}(P,P)\otimes_{\bf Z}\tilde{\bf Z}[{\mathcal{ X}}_+\vee {\mathcal{ X}}_+^2\vee\cdots\vee {\mathcal{ X}}_+^n])
\end{multline*}
is $|S|-1+\Sigma k_s $ Cartesian, since then the functor
$\tilde K(\mathcal{ T}_R(\tilde R[X_+])/I^{n+1}) $ will satisfy $E_{|S|-1}(1-|S|)$ and hence be $-1$ analytic. 
If we show that the cube 
$B.( \mathbf{1}_P+I_P(\mathcal{ T}_R(\tilde R[\mathcal{X}_+])/I^{n+1}))$  is $2(|S|-1) + (\Sigma k_s)$ co-Cartesian for all $P\in \mathcal{ P}_R$, 
then since (homotopy) colimits commute and a $q$-reduced simplicial space of $t$-connected spaces is $(q+t)$--connected, 
$\bigvee_{P\in S^{(q)}_{\bullet}\mathcal{ P}_R}B.( \mathbf{1}_P+I_P(\mathcal{ T}_R(\tilde R[\mathcal{X}_+])/I^{n+1}))$ 
will be $(q+2(|S|-1)+\Sigma k_s)$--co--Cartesian. By taking $\Omega^q$ of these and the limit
with respect to $q$ we will obtain a $2(|S|-1) + \Sigma k_s$-co-Cartesian diagram
of spectra which is equivalent to a $|S|-1 + \Sigma k_s$--Cartesian diagram of spectra (see
\cite{Calc2}, 1.19) and hence the result. 

We will prove that in general, for the $S$--cube $\mathcal{ X}$ 
$$B.( \mathbf{1}_P+I_P(\mathcal{ T}_R(\tilde R[\mathcal{ X}_+])/I^{n+1})) 
\cong B.( \mathbf{1}_P+\Hom_R(P,P)\otimes_{\bf Z}\tilde{\bf Z}[{\mathcal{ X}}_+\vee {\mathcal{ X}}_+^2\vee\cdots\vee {\mathcal{ X}}_+^n])$$
is $2(|S|-1)+\Sigma k_s$ co-Cartesian by induction on $n$. 
We recall that by Theorem 2.6 of \cite{Calc2}, to show an $S$-cube $\mathcal{ Y}$ is
$2(|S|-1)+\Sigma k_s$--co-Cartesian is suffices to show

\begin{hyp}
For each
$T\not = \emptyset$  the  $T$--cube $\partial_{S-T}\mathcal{ Y}$ is
$2(|T|-1) +\Sigma_{t\in T} k_t$-Cartesian.
\end{hyp}
 
In proposition 3.2 of \cite{ACTA} , the case for $n=1$ was done. In particular, the cube
$ B.( \mathbf{1}_P+\Hom_R(P,P)\otimes_{\bf Z}\tilde{\bf Z}[{\mathcal{ X}}_+])$ was shown to satisfy the induction
hypothesis. 
 We have an extension of groups
\begin{multline*}
( \mathbf{1}_P+{\rm Hom}_{\mathcal{ M}_R}(P,P)\otimes_{\bf Z}\tilde{\bf Z}[{\mathcal{ X}}_+^n])\\ \rightarrow
( \mathbf{1}_P+{\rm Hom}_{\mathcal{ M}_R}(P,P)\otimes_{\bf Z}\tilde{\bf Z}[{\mathcal{ X}}_+\vee {\mathcal{ X}}_+^2\vee\cdots\vee {\mathcal{ X}}_+^n])
\\ \stackrel{\pi}{\rightarrow}
( \mathbf{1}_P+{\rm Hom}_{\mathcal{ M}_R}(P,P)\otimes_{\bf Z}\tilde{\bf Z}[{\mathcal{ X}}_+\vee {\mathcal{ X}}_+^2\vee\cdots\vee {\mathcal{ X}}_+^{(n-1)}])
\end{multline*}
Taking the bar construction we obtain a Kan fibration of cubes. By induction, the cube in
the base satisfies the induction hypothesis, and so if the cube in the fiber does also, then
since homotopy pullbacks commute (and these are cubes of connected spaces) the
induction hypothesis will hold for the extension cube. 
The cube 
$B.( \mathbf{1}_P+{\rm Hom}_{\mathcal{ M}_R}(P,P)\otimes_{\bf Z}\tilde{\bf Z}[{\mathcal{ X}}_+^n])$ satisfies
\begin{multline*} 
B.( \mathbf{1}_P+{\rm Hom}_{\mathcal{ M}_R}(P,P)\otimes_{\bf Z}\tilde{\bf Z}[{\mathcal{ X}}_+^n])
\cong
B.( {\rm Hom}_{\mathcal{ M}_R}(P,P)\otimes_{\bf Z}\tilde{\bf Z}[{\mathcal{ X}}_+^n])\\
\cong
 {\rm Hom}_{\mathcal{ M}_R}(P,P)\otimes_{\bf Z} B.(\tilde{\bf Z}[{\mathcal{ X}}_+^n])
 \cong
{\rm Hom}_{\mathcal{ M}_R}(P,P)\otimes_{\bf Z}\tilde{\bf Z}[(\Sigma\mathcal{ X})_+^n].
\end{multline*}
Since
$\Sigma\mathcal{ X}$ is again a strongly co-Cartesian $S$--cube with the
maps $\Sigma\mathcal{ X}(\emptyset)\rightarrow\Sigma\mathcal{ X}(s)$ $k_s+1$
connected for all $s\in S$,
for all $T\not=\emptyset$, $\partial_{S-T}\Sigma\mathcal{ X}$ is a $T$--strongly
co-Cartesian cube with $k_t+1$ connectivity for all $t\in T$ and so the induction
hypothesis is satisfied by
example 4.4 of \cite{Calc2} (for the functor $X\mapsto \tilde Z[X^n_+]$).
\end{proof}

\section{The Main Theorem and its Corollaries}
%Recall the definition of $\mathcal{ T}^{\pi}_R(M) = {\rm lim}_{n} \mathcal{ T}_R(M)/I^{n+1}.$
\begin{theorem} 
\label{mainone}
For $R$ a unital ring,  for every $n$ there is a natural transformation of functors of simplicial $R$-bimodules
$${\Phi_n}:\Sigma W_n(R;\ \ )\stackrel{ \simeq}{\rightarrow}\ \ P_n\tilde K(\mathcal{ T}^{\pi}_R(\ \ ))$$
such that  $\Phi_{n-1}\circ  \Sigma\mathrm{res}_n\simeq p_n\circ \Phi_n$, which is a homotopy equivalence at any simplicial $R$-bimodule.
\end{theorem}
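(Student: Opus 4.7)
The plan is to first construct $\Phi$ on the pointset level. Given a pair $(P, m)$ with $P \in \mathcal{P}_R$ and $m : P \to P \otimes_R M$ representing a class in $\tilde K(R; M)$, the formal sum $\mathbf{1}_P + m + m^{\otimes_R 2} + \cdots$ converges $I$-adically to an element of $\mathbf{1}_P + I_P(\mathcal{T}^\pi_R(M))$, namely the inverse of $\mathbf{1}_P - m$. Using the bar/$S$-construction model of $\tilde K$ recalled in Section 2, this assignment defines a natural map $\tilde K(R; -) \to \Omega \tilde K(\mathcal{T}^\pi_R(-))$, equivalently a natural transformation $\Phi : \Sigma \tilde K(R; -) \to \tilde K(\mathcal{T}^\pi_R(-))$. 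Setting $\Phi_n := P_n \Phi$, the commutation $\Phi_{n-1}\circ \Sigma\mathrm{res}_n \simeq p_n\circ \Phi_n$ follows automatically from the naturality of the Taylor tower, using the identification $P_n(\Sigma \tilde K(R; -)) \simeq \Sigma W_n(R; -)$ from \cite{LMcC1}.

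The main content is then to show each $\Phi_n$ is an equivalence. I would proceed by induction on $n$, with the base case $n = 0$ trivial. For the induction step, having $\Phi_{n-1}$ an equivalence, by the five-lemma applied to the fiber sequences of Taylor tower stages $D_n F \to P_n F \to P_{n-1} F$ on both sides, it suffices to show that $\Phi$ induces an equivalence on $n$-th homogeneous layers. By \cite{LMcC1}, the $n$-th layer of $\Sigma W(R; -)$ at $M$ is (up to a suspension) the cyclic homotopy orbits $(M^{\hat{\otimes}_R n})_{hC_n}$ of the $n$-fold circular derived tensor product.

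To identify the $n$-th layer on the right, I would use that $\mathcal{T}^\pi_R(M) = \lim_k \mathcal{T}_R(M)/I^{k+1}$ and that the quotient map $\mathcal{T}^\pi_R(M) \to \mathcal{T}_R(M)/I^{n+1}$ is an isomorphism on the length filtration in degrees $\leq n$. Combined with Proposition \ref{analytic}, this should yield $P_n \tilde K(\mathcal{T}^\pi_R(M)) \simeq P_n \tilde K(\mathcal{T}_R(M)/I^{n+1})$. Then decomposing $\mathcal{T}_R(M)/I^{n+1}$ as a tower of square-zero extensions with successive kernels $I^k/I^{k+1} \cong M^{\hat{\otimes}_R k}$, and applying the identification of the K-theory of a trivial square-zero extension with a shift of THH from \cite{DGMcC}, should identify the $n$-th derivative of $\tilde K(\mathcal{T}_R(M)/I^{n+1})$ with the same cyclic homotopy orbits $(M^{\hat{\otimes}_R n})_{hC_n}$ up to suspension, matching the left hand side.

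The hardest step will be checking that $\Phi$ implements the \emph{correct} equivalence on these $n$-th layers, i.e.\ that the map $m \mapsto \sum_i m^{\otimes_R i}$ carries the cyclic $C_n$-structure on the $n$-th derivative of $\tilde K(R; -)$ exactly to the cyclic structure arising from conjugation in the bar construction on the right. Since only the length-$n$ term $m^{\otimes_R n}$ contributes to the $n$-th derivative after the reductions above, this reduces to a cyclic-invariance check which is essentially already present in Waldhausen's original equivalence for the nilpotent subcategory; the passage to $\mathcal{T}^\pi_R$ is precisely what allows Waldhausen's map to be extended from the nilpotent setting to all of $\tilde K(R; -)$.
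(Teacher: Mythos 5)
Your construction of $\Phi$ and the formal reduction (naturality of the Taylor tower together with $P_n(\Sigma\tilde K(R;\ \ ))\simeq\Sigma W_n(R;\ \ )$ from \cite{LMcC1}) match the paper exactly. But the route you propose for the main comparison --- induction on the homogeneous layers $D_n$ at the basepoint, computed from the $I$-adic filtration of $\mathcal{ T}_R(M)/I^{n+1}$ --- has two concrete gaps. First, the extensions $\mathcal{ T}_R(M)/I^{k+1}\to\mathcal{ T}_R(M)/I^{k}$ are square-zero but \emph{not split}: a ring section would force the $k$-fold products of elements of $M$ to vanish in $\mathcal{ T}_R(M)/I^{k+1}$, which they do not. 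So the ``K-theory of a trivial square-zero extension'' identification you invoke does not apply to these relative terms (and even for split extensions that identification with $\THH$ holds only as the linear approximation, not on the nose); moreover each relative term is not a homogeneous functor of $M$, so extracting $D_n$ of the composite functor from this filtration is a genuine piece of work that you have only asserted. Second, the step you yourself flag as hardest --- that $\Phi$ induces \emph{the} equivalence on layers rather than the layers merely being abstractly equivalent --- is where essentially all of the paper's labor lies (its Section 4 and three technical lemmas), and your reduction of it to ``only $m^{\otimes_R n}$ contributes'' presupposes the layer identification whose proof is the first gap.

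For comparison, the paper sidesteps both problems by working with first differentials at an arbitrary base point $M$ rather than with all the layers at the basepoint: for $N$ $n$-connected it compares the fibers of $F(M\oplus N)\to F(M)$ on the two sides in the $2n$-range. On the $\Sigma\tilde K(R;\ \ )$ side it uses the splitting of $\tilde K(R;M\oplus N)$ into cyclic words from \cite{LMcC2} and collapses the words containing $N$ at least twice; on the $\tilde K(\mathcal{ T}^{\pi}_R(\ \ ))$ side it replaces $\mathcal{ T}_R(M\oplus N)$ by the genuinely \emph{split} square-zero extension $\mathcal{ T}_R(M)\ltimes(\mathcal{ T}_R(M)\otimes^\wedge_R N\otimes^\wedge_R\mathcal{ T}_R(M))$ via a $2n$-connected ring map $\Psi$, where the Dundas--McCarthy equivalence of stable K-theory with $\THH$ and a change-of-rings equivalence for $\THH$ legitimately apply; analyticity (Proposition \ref{analytic}) then upgrades agreement of differentials to agreement of the towers. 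The explicit tracing showing that the composite is induced by $(m,n)\mapsto(\mathbf{1}-m)^{-1}\otimes n$ is exactly the ``cyclic-invariance check'' you deferred. If you want to salvage your layer-by-layer plan, the cleanest fix is to import this split square-zero comparison in place of the $I$-adic filtration.
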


Note that for $M$ which are flat on one side, making 
 the tensoring down map $M^{\otimes^\wedge_Rn}\to M^{\otimes _R n}$ into a weak equivalence for every $n$, we get that $\TRM\simeq T_R(M)$, so  $\TRM/I^{n+1}\simeq T_R(M)/I^{n+1}$ 
 and $\TpRM\simeq T^\pi_R(M)$.
 Therefore by \cite{Wald}, $\tilde K(\TRM)\simeq \tilde K(T_R(M))$ and $\tilde K(\TpRM)\simeq \tilde K(T^{\pi}_R(M))$.

\medskip
For the definition and properties of the $W_n$,  see \cite{LMcC1}.  It is the inverse limit over all $i\leq n$ of the $C_i$ fixedpoints in the cyclic derived tensor over $R$ of $i$ copies of $M$ which was described in the introduction.  The map  $\mathrm{res}_n$ comes from restriction of categories over which limits are taken from $\{1,2,\ldots,n\}$ to  $\{1,2,\ldots,n-1\}$; $p_n$ are the connecting maps of the Goodwillie tower of the functor.
 
In  \cite{LMcC1}, it is shown that $W_n(R;\ \ )= P_n \tilde K(R;\ \ )$, from which it follows that 
$\Sigma W_n(R;\ \ )= P_n(\Sigma \tilde K(R;\ \ ))$.  What Theorem \ref{mainone} in fact shows is that there
exists a natural transformation inducing an equivalence between Goodwillie Taylor towers of the functors $\Sigma \tilde K(R;\ \ )$ and $\tilde K(\mathcal{ T}^{\pi}_R(\ \ ))$.  
Moreover,  we can draw the following

\begin{cor}
\label{notconn}
For any unital ring $R$, there is a natural equivalence of functors of  simplicial $R$-bimodules
$$\Sigma W(R;\ \ )\to 
{\rm holim}_n \tilde K(\mathcal{ T}_R(\ \ )/I^{n+1}) .$$
\end{cor}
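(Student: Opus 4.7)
The plan is to deduce the Corollary from Theorem \ref{mainone} by passing to homotopy inverse limits over $n$ and then exchanging the formal power series algebra $\TpRM$ for its truncations $\TRM/I^{n+1}$. Since by definition $W(R;M)={\rm holim}_n W_n(R;M)$, taking ${\rm holim}_n$ of the natural equivalences $\Phi_n$ of Theorem \ref{mainone} yields a natural equivalence
\[
\Sigma W(R;M)\;\stackrel{\simeq}{\longrightarrow}\;{\rm holim}_n P_n\tilde K(\TpRM),
\]
so the task reduces to identifying the right-hand side with ${\rm holim}_n\tilde K(\TRM/I^{n+1})$.

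The key technical input is the following stability claim: for each pair $n\leq k$, the canonical projection $\TRM/I^{k+1}\to\TRM/I^{n+1}$ induces a natural equivalence
\[
P_n\tilde K(\TRM/I^{k+1})\;\stackrel{\simeq}{\longrightarrow}\;P_n\tilde K(\TRM/I^{n+1}),
\]
and similarly $P_n\tilde K(\TpRM)\stackrel{\simeq}{\to}P_n\tilde K(\TRM/I^{n+1})$. The reason is that in each case the kernel, regarded as a functor of $M$, is built only from the tensor powers $M^{\otimes^{\wedge}_R i}$ with $i\geq n+1$, so the induced homotopy fiber of $\tilde K$ has vanishing cross-effects through arity $n$ and is therefore annihilated by $P_n$. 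Granting this, form the bi-indexed system $Y_{k,m}:=P_k\tilde K(\TRM/I^{m+1})$. Proposition \ref{analytic} gives, for each fixed $m$, an equivalence $\tilde K(\TRM/I^{m+1})\simeq{\rm holim}_k Y_{k,m}$, while the stability claim says that for each fixed $k$ the tower $m\mapsto Y_{k,m}$ stabilizes for $m\geq k$ at $Y_{k,k}\simeq P_k\tilde K(\TpRM)$. Since homotopy limits commute with each other, interchanging the two inverse limits yields the chain
\begin{align*}
{\rm holim}_m\tilde K(\TRM/I^{m+1})
&\simeq{\rm holim}_m{\rm holim}_k Y_{k,m}\simeq{\rm holim}_k{\rm holim}_m Y_{k,m}\\
&\simeq{\rm holim}_k P_k\tilde K(\TpRM)\simeq\Sigma W(R;M),
\end{align*}
with the natural transformation arising from the composite of $\Phi$, the projections $\TpRM\to\TRM/I^{n+1}$, and the Taylor-tower approximation maps.

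The main obstacle will be verifying the stability claim, i.e.\ showing that $P_n\tilde K$ does not detect contributions from tensor powers $M^{\otimes^{\wedge}_R i}$ with $i\geq n+1$. Concretely, one must show that the homotopy fiber of $\tilde K(\TRM/I^{k+1})\to\tilde K(\TRM/I^{n+1})$, regarded as a functor of the simplicial $R$-bimodule $M$, is arbitrarily strongly co-Cartesian on strongly co-Cartesian cubes of dimension at most $n$. This should follow by a direct adaptation of the connectivity estimates in the proof of Proposition \ref{analytic}: the same inductive bar-construction-and-cube argument, applied to the fiber sequence of augmented $R$-algebras whose relative augmentation ideal depends on $M$ only through tensor powers of order $\geq n+1$, should supply the required vanishing of cross-effects. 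The auxiliary ingredients—naturality of the chain, the commutation of homotopy limits, and the identification of the inner limit via Proposition \ref{analytic}—are then routine.
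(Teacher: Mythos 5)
Your overall strategy is the same as the paper's: feed Theorem \ref{mainone} into ${\rm holim}_k$, identify $P_k\tilde K(\mathcal{T}^{\pi}_R(M))$ with ${\rm holim}_m P_k\tilde K(\mathcal{T}_R(M)/I^{m+1})$, interchange the two homotopy limits, and close with the convergence statement of Proposition \ref{analytic}. The one place where you diverge is also the one soft spot: your justification of the ``stability claim.'' You argue that the homotopy fiber of $\tilde K(\mathcal{T}_R(M)/I^{k+1})\to\tilde K(\mathcal{T}_R(M)/I^{n+1})$ has vanishing cross-effects through arity $n$ ``because the kernel involves only $M^{\otimes^{\wedge}_R i}$ with $i\geq n+1$.'' That inference is not formal: relative $K$-theory is not a functor of the ideal alone and $\tilde K$ does not commute with cross-effects, and for the comparison with $\mathcal{T}^{\pi}_R(M)$ the kernel is $\prod_{i>n}M^{\otimes^{\wedge}_R i}$, which for non-connected $M$ is not even connected, so no pointwise vanishing is available. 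What actually makes the step work --- and is how the paper argues --- is that $P_k$ of a functor is determined by its restriction to (highly) connected bimodules, where the ring maps $\mathcal{T}^{\pi}_R(M)\to\mathcal{T}_R(M)/I^{m+1}$ are as connected as one likes (connectivity growing like $(m+1)c$ for $c$-connected $M$); since $\tilde K$ preserves connectivity of maps (Proposition \ref{analytic}, condition 3), the tower $m\mapsto\tilde K(\mathcal{T}_R(M)/I^{m+1})$ is pro-constant in any fixed range on connected $M$. This is exactly Goodwillie's ``agreement to order $k$'' criterion, and it yields both your stability claim and the commutation $P_k\tilde K(\mathcal{T}^{\pi}_R(M))\simeq{\rm holim}_m P_k\tilde K(\mathcal{T}_R(M)/I^{m+1})$ in one stroke. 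With your cross-effect sketch replaced by this connectivity argument, your proof is complete and coincides with the paper's.
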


 \begin{proof}{\it (Of Corollary \ref{notconn}, given Theorem \ref{mainone})}
We have, by  Theorem \ref{mainone} above, that for any $R$-bimodule $M$,
\begin{multline}
\label{above}
\Sigma W(R;M)\stackrel{{\rm def}}{=}
{\rm holim}_k \Sigma W_k(R;M)\simeq {\rm holim}_k P_k\tilde K(\mathcal{ T}^\pi _R(M))
\\   \stackrel{{\rm def}}{=}
 {\rm holim}_k P_k\tilde K({\rm holim}_n \mathcal{ T}_R(M )/I^{n+1}).
\end{multline}
But the Taylor tower of the functor $K({\rm holim}_n \mathcal{ T}_R(\ \  )/I^{n+1})$ can be determined by applying it to $M$ connected, where the map ${\rm holim}_n \mathcal{ T}_R(M )/I^{n+1} \to
 \mathcal{ T}_R(M )/I^{n_0+1}$ can be as connected as we want it to be, and $\tilde K$ preserves connectivity of maps, so
 $$P_k\tilde K({\rm holim}_n \mathcal{ T}_R(M )/I^{n+1})
 =  {\rm holim}_n P_k\tilde K(\mathcal{ T}_R(M )/I^{n+1}),$$
 which we can plug into 
equation (\ref{above}) to get
\begin{multline*}
\Sigma W(R;M)\simeq
 {\rm holim}_k {\rm holim}_n P_k\tilde K(\mathcal{ T}_R(M )/I^{n+1})
 \\
 \simeq  {\rm holim}_n {\rm holim}_k P_k\tilde K(\mathcal{ T}_R(M )/I^{n+1})
 \simeq  {\rm holim}_n \tilde K(\mathcal{ T}_R(M )/I^{n+1}),
\end{multline*}
where the last equality is the convergence of the Taylor tower for $\tilde K(\mathcal{ T}_R(\ \  )/I^{n+1})$
fro Proposition \ref{analytic} 
above.
\end{proof}
\medskip
\begin{cor}
\label{conn}
If $R$ is a unital ring, there is a natural equivalence of functors of connected simplicial $R$-bimodules
$$\Phi:\Sigma \tilde K(R;\ \ )\to \tilde K(\mathcal{ T}_R(\ \ )).$$
\end{cor}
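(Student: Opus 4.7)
The plan is to derive Corollary \ref{conn} from Corollary \ref{notconn} combined with two convergence statements valid for connected bimodules: the convergence of the Goodwillie Taylor tower of $\tilde K(R;-)$ (from \cite{LMcC1}), and the fact that $\tilde K(\mathcal{T}_R(M))$ agrees with ${\rm holim}_n \tilde K(\mathcal{T}_R(M)/I^{n+1})$ in the connected setting.

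First, for connected $M$ the Goodwillie Taylor tower of $\tilde K(R;-)$ converges, a result established in \cite{LMcC1} via analyticity of $\tilde K(R;-)$, so $\tilde K(R;M)\simeq{\rm holim}_n W_n(R;M)=W(R;M)$. Suspending and chaining with Corollary \ref{notconn} produces
$$\Sigma\tilde K(R;M)\simeq \Sigma W(R;M)\simeq {\rm holim}_n\tilde K\bigl(\mathcal{T}_R(M)/I^{n+1}\bigr).$$

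Second, I would show that for connected $M$ the right-hand side is equivalent to $\tilde K(\mathcal{T}_R(M))$. The fiber of the projection $\mathcal{T}_R(M)\to \mathcal{T}_R(M)/I^{n+1}$ is $I^{n+1}$, which is built from derived tensor powers $M^{\hat\otimes k}$ with $k\geq n+1$. If $M$ is $0$-connected, the standard connectivity estimate for iterated derived tensor products of connected simplicial bimodules gives that $M^{\hat\otimes k}$ is at least $(k-1)$-connected, whence $I^{n+1}$ is at least $n$-connected. Since $\tilde K$ of simplicial rings preserves connectivity of maps (the ingredient already invoked in the proof of Proposition \ref{analytic}), the induced maps $\tilde K(\mathcal{T}_R(M))\to \tilde K(\mathcal{T}_R(M)/I^{n+1})$ have connectivity tending to infinity with $n$, so
$$\tilde K(\mathcal{T}_R(M))\simeq {\rm holim}_n\tilde K\bigl(\mathcal{T}_R(M)/I^{n+1}\bigr).$$
Concatenating these two equivalences produces the desired natural equivalence $\Sigma\tilde K(R;M)\xrightarrow{\simeq}\tilde K(\mathcal{T}_R(M))$; naturality of $\Phi$ is inherited from that of the constructions in Theorem \ref{mainone} and Corollary \ref{notconn}.

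The main obstacle is really the convergence statement in the first step, namely that the Taylor tower of $\tilde K(R;-)$ converges on connected simplicial bimodules; this is a nontrivial result that one must cite from \cite{LMcC1}. Once that is granted, the second step is a routine connectivity argument, and the remainder is formal manipulation of homotopy limits.
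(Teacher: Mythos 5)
Your proposal is correct and follows essentially the same route as the paper: the paper likewise rests on the convergence of the Taylor tower of $\tilde K(R;\ \ )$ on connected bimodules (Theorem 9.2 of \cite{LMcC1}, extended by a realization/resolution argument), on Proposition \ref{analytic}, and on the observation that for connected $M$ the maps to $\mathcal{ T}_R(M)/I^{n+1}$ become highly connected so that $\tilde K(\mathcal{ T}_R(M))\simeq{\rm holim}_n\tilde K(\mathcal{ T}_R(M)/I^{n+1})$; routing through Corollary \ref{notconn} rather than directly through Theorem \ref{mainone} is only a repackaging. The one point to make explicit is that the resulting composite equivalence agrees with the specific map $\Phi$, which follows from the compatibility $\Phi_{n-1}\circ\Sigma\mathrm{res}_n\simeq p_n\circ\Phi_n$ asserted in Theorem \ref{mainone}.
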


 \begin{proof}{\it (Of Corollary \ref{conn}, given Theorem \ref{mainone})}
The natural transformation $\Phi$ is that introduced in the beginning of the proof of Theorem \ref{mainone}, which induces the $\Phi_n$'s.   The point is that for connected $M$, both the Taylor tower of $\Sigma \tilde K(R;\ \ )$ converges to 
$\Sigma \tilde K(R;M )$ (since that of $ \tilde K(R;\ \ )$ converges to $ \tilde K(R;M )$), and the Taylor tower of 
$\tilde K(\mathcal{ T}^{\pi}_R(\ \ ))$ converges to $\tilde K(\mathcal{ T}^{\pi}_R(M))$.  
Moreover, for connected $M$, the map 
$$\mathcal{ T}_R(M)\rightarrow \mathcal{ T}_R^{\pi}(M)$$
%which is induced by the reductions modulo powers of $I$
 is
an equivalence.   The fact that this map is an equivalence for connected $M$ shows that the map is order $n$ for all $n$
and hence $P_nK(\mathcal{ T}_R (\ \ ))\stackrel{\simeq}{\rightarrow} P_nK(\mathcal{ T}^{\pi} _R(\ \ ))$ for all $n$, so the convergence of the Taylor tower for $\tilde K^{\pi} (\mathcal{ T}_R(\ \ ))$ for connected $M$ is in fact the convergence of the Taylor tower for $\tilde K(\mathcal{ T}_R(\ \ ))$ for such $M$.

The convergence of the Taylor tower for $ \tilde K(R;\ \ )$  follows from Theorem 9.2 in \cite{LMcC1}, which shows it for the special case of $M=\tilde N[X_.]$ for $X_.$ connected, $N$ discrete.
To go from that to the general case of $M$ connected, observe that $\tilde K(R;\ \ )\simeq\tilde K(R\ltimes \Sigma\ \ )$ commutes with realizations by \cite{Wald}.  The finite stages of the Taylor tower
$W_n(R;\ \ )$ commute with realizations as finite inverse limits of the $U_a(R;\ \ )^{C_a}$ which are directly seen to commute with realizations, but for $M$ connected the map $W(R;M)\to W_n(R;M)$ is $n$-connected, that is, it can be as connected as we like by taking $n$ large enough, so $W(R;\ \  )$ commutes with realizations for connected bimodules.
We want to show that the map to the Taylor tower $ \tilde K(R;\ \ )\to W(R;\ \  )$ is an equivalence for any connected bimodule, and both sides commute with realizations for such bimodules.  But any connected bimodule is homotopy equivalent to the realization of a bisimplicial set, assigning to each $n$ a simplicial set of the form covered by Theorem  9.2 in \cite{LMcC1}: Given a general connected simplicial $R$-bimodule $M$, we can first represent it by a reduced one (that has only a single $0$-simplex, the basepoint) by looking at the sub-simplicial bimodule $M_0$ consisting of all the simplices in $M$ all of whose vertices are at the basepoint.  The inclusion $M_0\hookrightarrow M$ is an equivalence on $\pi_0$ by assumption, and on all higher homotopy groups by the definition of the homotopy groups of a simplicial abelian group.  Then, replace $M_0$ by its $R\otimes R^{\rm op}$-free simplicial resolution
$$\widetilde {R\otimes R^{\rm op}} [M_0] 
%{{\leftarrow} \atop{\leftarrow}}
\ \ \lower 3pt \vbox{\hbox{$\leftarrow$}\vskip -6pt\hbox{$\leftarrow$}}\ \ 
\widetilde {R\otimes R^{\rm op}} [\widetilde {R\otimes R^{\rm op}} [M_0]]
 \ \ \lower 6pt \vbox{\hbox{$\leftarrow$}\vskip -6pt\hbox{$\leftarrow$}\vskip -6pt\hbox{$\leftarrow$}}\ \ 
%{{{\leftarrow} \atop{\leftarrow}}\atop{\leftarrow}}
\widetilde {R\otimes R^{\rm op}} [\widetilde {R\otimes R^{\rm op}} [\widetilde {R\otimes R^{\rm op}} [M_0]]]\cdots
$$
%\lower 8pt \vbox{\hbox{$\leftarrow$}\vskip -5pt\hbox{$\leftarrow$}\vskip -5pt\hbox{$\leftarrow$}}
in which each stage is of the form $\tilde N[X_.]$ for $N=R\otimes R^{\rm op}$ discrete and a connected simplicial $X_.$.

The convergence of the Taylor tower for $\tilde K(\mathcal{ T}^{\pi}_R(\ \ ))$ for $M$ connected is due to the following facts: by Proposition \ref{analytic} above, the Taylor towers converge for 
$\tilde K(\mathcal{ T}_R(M)/I^{n+1})$, that is $\tilde K(\mathcal{ T}_R(M)/I^{n+1}) \simeq {\rm holim}_k P_k\tilde K(\mathcal{ T}_R(M)/I^{n+1})$.   
%Set $\mathcal{ T}^{\pi}_R(M) = {\rm lim}_{n} \mathcal{ T}_R(M)/I^{n+1}$.  We observe that for $M$ connected, t
The map $\mathcal{ T}^{\pi}_R(M) \to \mathcal{ T}_R(M)/I^{n+1}$ is as connected as we want it to be for $n$ large enough, and since $\tilde K(\ \ )$ preserves connectivity of maps by \cite{Wald}, we get that 
\begin{multline*}
\tilde K(\mathcal{ T}^{\pi}_R(M)) \simeq 
{\rm holim}_n \tilde K(\mathcal{ T}_R(M)/I^{n+1}) 
\simeq
{\rm holim}_n {\rm holim}_k P_k\tilde K(\mathcal{ T}_R(M)/I^{n+1})
\\ \simeq
{\rm holim}_k {\rm holim}_n P_k\tilde K(\mathcal{ T}_R(M)/I^{n+1})
\simeq 
{\rm holim}_k P_k \tilde K(\mathcal{ T}^{\pi}_R(M)).
\end{multline*}
%For connected $M$,  the obvious map 
%$\TRM\to \mathcal{ T}^{\pi}_R(M)$ is a weak equivalence, since the $I^{n+1}$ are increasingly connected, so the above calculation implies that $\tilde K(\TRM)\simeq {\rm holim}_k P_k \tilde K(\TRM)$.
\end{proof}

\bigskip\noindent
\begin{proof}{\it (Of Theorem \ref{mainone})}
The augmentation ideal $I$ for $\mathcal{ T}_R^{\pi}(M)\rightarrow R$ for any $M$ is such that $1+I$ is
contained in the units of $\mathcal{ T}_R^{\pi}(M)$ and hence the fiber of the
map $K(\mathcal{ T}_R^{\pi}(M))\rightarrow K(R)$ can by section I.2.5 of \cite{DGMcC} be modeled as the stabilization in Waldhausen's S-construction 
of the functor
$$\bigvee_{P\in\mathcal{ P}_R}B.( \mathbf{1}_P+I_P(\mathcal{ T}_R^{\pi}(M))).$$
As before,  $I_P(\mathcal{ T}_R^{\pi}(M)) = {\rm Hom}_{\mathcal{ M}_R}(P,P\otimes I)$ is considered as the
ideal given by the kernel of the ring map
${\rm Hom}_{\mathcal{ M}_R}(P,P \otimes_R\mathcal{ T}_R^{\pi}(M))\rightarrow {\rm Hom}_{\mathcal{ M}_R}(P,P)$.

We define a natural transformation 
$$\phi:\Sigma\tilde K(R;M)\rightarrow \tilde K(\mathcal{ T}^{\pi}_R(M))$$
as the stablization of the natural transformation between the model of $\tilde K(R;M)$ as the stabilization of
$\bigvee_{P\in\mathcal{ P}_R}\Hom_{\mathcal{ M}_R}(P, P\otimes_R M)$ and the above model of
$ \tilde K(\mathcal{ T}^{\pi}_R(M))$ which for a map $m\in \Hom_{\mathcal{ M}_R}(P, P\otimes_R M)$ sends
$$m\mapsto (1-m)^{-1} = \Sigma_{i=0}^{\infty} m^{\otimes_Ri}$$
The point is that $0$-simplices in $\tilde K(R;M)$ become $1$-simplices in its suspension; each such $1$-simplex which comes from the $0$-simplex $m$ is mapped to a $1$-simplex in the classifying space corresponding to the element
$$ \Sigma_{i=0}^{\infty} m^{\otimes_Ri} \in  B_1( \mathbf{1}_P+I_P(\mathcal{ T}_R^{\pi}(M)))=\mathbf{1}_P+I_P(\mathcal{ T}_R^{\pi}(M)).$$
  Note that, for example, the notation $m^{\otimes _R 2}$ means the composition
\begin{equation}
\label{tensorsmean}
P\stackrel{m}{\rightarrow} P\otimes _R M
 \stackrel{m\otimes \mathbf{1}_M}{\longlongrightarrow} 
 P\otimes _R M  \otimes _R M 
% \stackrel{ \mathbf{1}_P\otimes \mathrm{mult}}{\longrightarrow} 
% P\otimes _R M  
\end{equation}
 and is therefore also in $I_P(\mathcal{ T}_R^{\pi}).$
%$${\rm Hom}_{\mathcal{ M}_R}(P,P\otimes_RM)\mapsto B_1(1+{\rm Hom}_{\mathcal{ M}_R}(P,P\otimes_R I)$$
%$$\bigvee_{\mathcal{ P}_R}M_P\mapsto \bigvee_{\mathcal{ P}_R}B_1(1+I_P)$$
%$$\Sigma\left(\bigvee_{\mathcal{ P}_R}M_P\right)\mapright (1-m)^{-1}. \bigvee_{\mathcal{ P}_R}B.(1+I_P)$$

\medskip
What we want to show is that this natural transformation $\phi$ induces an equivalence of the Goodwillie Taylor towers at the basepoint ${*}$, and these are determined by what they do on sufficiently connected spaces.  Thus, the theorem will follow once we show that $\phi$ induces an equivalence
after one suspension. We would like to establish the result using analytic continuation
as in \cite{Calc3}.  In order to do this we first must observe that $\tilde K(\mathcal{ T}^{\pi}_R(B.\ \ ))$ commutes with realizations,
has the limit axiom and is -1--analytic.  These are all true because the fact that  $\mathcal{ T}^{\pi}_R(B.\ \ )\rightarrow \mathcal{ T}_R(B.\ \ )/I^{n+1}$
is $n$ connected for all $n$ implies $\tilde K(\mathcal{ T}^{\pi}_R(B.\ \ ))\rightarrow \tilde K(\mathcal{ T}_R(B.\ \ )/I^{n+1})$
is $n$ connected for all $n$ and these results hold for $K(\mathcal{ T}_R(B.\ \ )/I^{n+1})$ for all $n$ by Proposition \ref{analytic} above.  

Thus, we fix our $M$, which we may assume to be connected, and are interested in the fibers of 
$\Sigma\tilde K(R;M\oplus N)\rightarrow \Sigma\tilde K(R;M)$ and of
$\tilde K(\mathcal{ T}^{\pi}_R(M\oplus N))\rightarrow \tilde K(\mathcal{ T}^{\pi}_R(M))$ in 
a $2n$ range when $N$ is $n$--connected.  Since we assume that $M$ is connected,
$ \mathcal{ T}_R(M)\stackrel{\simeq}{\to} \mathcal{ T}^{\pi}_R (M)$ and 
$ \mathcal{ T}_R(M\oplus N)\stackrel{\simeq}{\to} \mathcal{ T}^{\pi}_R (M\oplus N)$
.

For the fiber of $\Sigma\tilde K(R;M\oplus N)\rightarrow \Sigma\tilde K(R;M)$, we can describe it  using  \cite{LMcC1} which shows that for connected bimodules $ \tilde K(R,\  )\simeq W(R;\ )$ (that is, for connected bimodules the Taylor tower converges to $\tilde K(R;\ )$) together with the splitting of Theorem 2.2 in \cite{LMcC2} for $W(R;\ )$.  We get that for $M,N$ connected, 
\begin{multline*}
\tilde K(R;M\oplus N) 
\\ =\bigvee_{a=1}^\infty\   \ \bigvee_{\{f:\ \{1,\ldots,a\}\to\{ M,N\}\ \mathrm{non\ periodic}\} /C_a}\tilde K(R;f(1)\otimes^\wedge_R\cdots\otimes^\wedge_Rf(a)),
\end{multline*}
where $C_a$ acts on functions $ \{1,\ldots,a\}\to\{ M,N\}$ by permuting  $ \{1,\ldots,a\}$ cyclically before applying the function, and a function $f$ is considered periodic if for some $b| a$, the value of $f(i)$ is determined by the remainder of $i$ when divided by $b$, that is: when if we write the values of $f$ as a word of length $a$ in $M$ and $N$, that word is a word of length $b$ repeated $a/b$ times.  It follows from the discussion there that the maps $M\hookrightarrow M\oplus N\stackrel{p_1}{\rightarrow} M$ embed $\tilde K(R;M)$ as
the direct summand corresponding to the function from the set of one element $1\mapsto M$, so the homotopy fiber of the projection map consists of all the other summands.

The fiber of $\tilde K(\mathcal{ T}_R(M\oplus N))\rightarrow \tilde K(\mathcal{ T}_R(M))$ is exactly the algebraic K-theory  of $\mathcal{ T}_R(M\oplus N)$ reduced over $\TRM$.  We can compare this reduced algebraic K-theory  to that of another ring:  Note that, by sending any terms with more than one tensored entry in $N$ to the basepoint, we have a $2n$-connected multiplicative map
$$ \mathcal{ T}_R(M\oplus N) \stackrel{\Psi}{\to}\TRM\ltimes(\TRM\otimes^{\wedge}_R
N \otimes^{\wedge}_R\TRM).$$

\medskip
We can put all this together in a commutative diagram
\begin{equation}
\label{maineqn}
\xymatrix{
\Sigma\ar[d]^{\phi}
\tilde K(R;M)
\ar[r]^{} &
\Sigma\ar[d]^{\phi}
\tilde K(R;M\oplus N)
\ar[r]^{} &
 \Sigma\ar[d]^{\alpha}\tilde K(R;N)\vee\bigvee \Sigma\tilde K(R;\otimes_if(i))\\
\tilde K \ar[d]^{=} (\TRM)
\ar[r]^{} &
\tilde K \ar[d]^{\Psi_*}
( \mathcal{ T}_R(M\oplus N))
\ar[r]^{} &
\tilde K_{\TRM}
(\mathcal{ T}_R(M\oplus N))\ar[d]^{\beta}\\ 
\tilde K(\TRM)
\ar[r]^{} &
\tilde K
("\TRM\ltimes N")
\ar[r]^{} &\tilde K_{\TRM}
("\TRM\ltimes N")\\}
\end{equation}
where
\begin{multline*}
\bigvee \Sigma\tilde K(R;\otimes_if(i))
\\ =\bigvee_{a=2}^\infty\   \ \bigvee_{\{f:\ \{1,\ldots,a\}\to\{ M,N\}\ \mathrm{non\ periodic}\} /C_a} \Sigma\tilde K(R;f(1)\otimes^\wedge_R\cdots\otimes^\wedge_Rf(a))
\end{multline*}
and
$$"\TRM\ltimes N"= \TRM\ltimes(\TRM\otimes^{\wedge}_R
N \otimes^{\wedge}_R\TRM).$$
The left column maps to the center column by maps induced by the obvious inclusions.  Since the inclusions are all inclusions of retracts, the spectra in the center column all split as the product of the spectrum on their left and the spectrum on their right.  
%In the bottom row, the homotopy fiber is of course by definition $\tilde K_{\TRM}
%( \TRM\ltimes \TRM\otimes
%N \otimes\TRM$; by \cite{DMcC1}, 
%$$\tilde K_{\TRM}
%( \TRM\ltimes \TRM\otimes
%N \otimes\TRM\simeq K(\TRM;\Sigma \TRM\otimes
%N \otimes\TRM).$$

\medskip
Our goal is to show that when $N$ is $n$-connected, $\alpha$ is $2n$-connected in equation (\ref{maineqn}).  This would mean that $\phi$ induces an equivalence of the Goodwillie differentials at $M$.  Since $\Psi$ is $2n$-connected, $\Psi_*$ and therefore also $\beta$ are $2n$-connected as well.  So our strategy will be to show that $\beta\circ\alpha$ is $2n$-connected, and deduce from that that $\alpha$ is. 

It is plausible that $\beta\circ\alpha$ is $2n$-connected, since we will now see that its target and source have the same homotopy type in these dimensions.  In the next sections, we will see that $\beta\circ\alpha$ actually induces a $2n$-equivalence.

We can map
\begin{multline}
\label{oneway}
\Sigma\tilde K(R;N)\vee\bigvee_{a=2}^\infty\   \  \bigvee_{\{f :\  \{1,\ldots,a\} \to \{ M,N\}\  \mathrm{non\ periodic}\} /C_a} 
\Sigma\tilde K(R;f(1)\otimes^\wedge_R\cdots\otimes^\wedge_Rf(a))\\
\stackrel{{2n}}{\to}
\bigvee_{a=0}^\infty \Sigma\tilde K(R;M^{\otimes^\wedge_R a}\otimes^\wedge_R N)
\stackrel{{2n}}{\to}
\bigvee_{a=0}^\infty \Sigma\tilde \THH(R;M^{\otimes^\wedge_R a}\otimes^\wedge_R N)\\
\simeq\THH(R;\TRM\otimes^\wedge_R\Sigma N)
\end{multline}
where the first map collapses all terms corresponding to $f$'s which hit $N$ more than once, and since reduced K-theory sends $2n$-connected bimodules to $2n$-connected spectra, it is $2n$-connected; the second map is $2n$-connected by \cite{DMcC1}, and the last map is an equivalence by the linearity of $\THH$ in the bimodule variable and since $M$ is connected.

By \cite{DMcC1},
\begin{multline*}
\tilde K_{\TRM}
( \TRM\ltimes \TRM\otimes^\wedge_R
N \otimes^\wedge_R\TRM\simeq K(\TRM;\Sigma \TRM\otimes^\wedge_R
N \otimes^\wedge_R\TRM)\\
\stackrel{2n}{\to}
\THH(\TRM;\Sigma \TRM\otimes^\wedge_R
N \otimes^\wedge_R\TRM)
\end{multline*}
and by Lemma \ref{scalars} below, 
there is a homotopy equivalence
$$\THH(R;\TRM\otimes^\wedge_R\Sigma N)
\stackrel{\simeq}{\to} 
\THH(\TRM;\TRM\otimes^\wedge_R\Sigma N\otimes^\wedge_R\TRM),$$
the same spectrum we ended up with in equation (\ref{oneway}).

\section{Checking that the Equivalence is Induced by the Correct Map}
This section is dedicated to finishing the proof of Theorem \ref{mainone} by tracing the maps in (\ref{maineqn}) to establish that $\beta\circ\alpha$ in fact induces a $2n$-equivalence for $N$ $n$-connected.  We will first need some lemmas, which will all be proven in the last section of the paper.

\begin{lemma}
\label{longwayround}
Model $\tilde K(R; M\oplus N)$ by Waldhausen's S-construction as the stabilization of 
$$ \bigvee_{P\in S^{(n)}\PR}\Hom_R
(P, P\otimes_R(M\oplus N))\cong\bigvee_{P\in S^{(n)}\PR}
(\Hom_R(P, P\otimes M)\oplus \Hom_R(P, P\otimes N));$$
model, similarly,
$$\tilde K_{\TRM}(\TRM\ltimes(\TRM\otimes^{\wedge}_R
N \otimes^{\wedge}_R\TRM))\simeq
\tilde K(\TRM; B_.(\TRM\otimes^{\wedge}_R
N \otimes^{\wedge}_R\TRM))
$$ 
(this is the homotopy equivalence of \cite{DMcC1})
as the stabilization of 
$$\bigvee_{Q\in S^{(n)}\mathcal{P}_{\TRM}} \Hom_{\TRM}(Q, Q\otimes_{\TRM}B_.(\TRM\otimes^{\wedge}_R
N \otimes^{\wedge}_R\TRM))
.$$
Then if we start at the middle of the top row of diagram (\ref{maineqn}), follow the maps $\phi$ and $\Psi_*$ down and then the map which goes right, the resulting map
\begin{multline*}
\tilde K(R; M\oplus N)\to \tilde K_{\TRM}(\TRM\ltimes(\TRM\otimes^{\wedge}_R
N \otimes^{\wedge}_R\TRM)) \\
\simeq
\tilde K(\TRM; B_.(\TRM\otimes^{\wedge}_R
N \otimes^{\wedge}_R\TRM))
\end{multline*}
is induced by sending the suspension $\Sigma(m,n)$ of each $(m,n)\in \Hom_R(P, P\otimes M)\oplus \Hom_R(P, P\otimes N)$ to
\begin{multline*}
(\mathbf{1}_{P\otimes _R \TRM}-m)^{-1}\otimes n
\\
\in \Hom_{\TRM}(P\otimes_R\TRM, P\otimes_R \TRM \otimes_{\TRM}\TRM\otimes^{\wedge}_R
N \otimes^{\wedge}_R\TRM)
\\
= \Hom_{\TRM}(P\otimes_R\TRM, P\otimes_R \TRM \otimes_{\TRM}B_1(\TRM\otimes^{\wedge}_R
N \otimes^{\wedge}_R\TRM))
\end{multline*}
in the summand corresponding to $Q=P\otimes_R\TRM$.
\end{lemma}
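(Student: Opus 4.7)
The plan is to track $\Sigma(m,n)$ through each stage of the composite. By the construction of $\phi$ in the proof of Theorem \ref{mainone}, $\phi(\Sigma(m,n))$ corresponds in the bar-construction model to the element
$\sum_{i=0}^\infty (m+n)^{\otimes_R i}\in \mathbf{1}_P + I_P(\mathcal{T}^\pi_R(M\oplus N)).$
Expanding each $(m+n)^{\otimes_R i}$ as a sum over binary words in $\{m,n\}^i$ and applying $\Psi_*$ kills every word containing two or more $n$'s. Setting $\widetilde M := \TRM\otimes^{\wedge}_R N\otimes^{\wedge}_R\TRM$, the surviving words assemble into $\mathbf{1}_P + (\alpha_\TRM,\alpha_{\widetilde M})\in \mathbf{1}_P + I_P(\TRM\ltimes \widetilde M)$, with $\alpha_\TRM = (1-m)^{-1}-\mathbf{1}_P$ coming from the zero-$n$ words and
$\alpha_{\widetilde M} = \sum_{j,l\geq 0}m^{\otimes_R j}\, n\, m^{\otimes_R l} = (1-m)^{-1}\cdot n\cdot(1-m)^{-1}$
coming from the one-$n$ words, with the $n$ at position $l+1$ in a length-$(j+l+1)$ word.

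The rightward arrow is the retraction arising from the split augmentation $\TRM\ltimes\widetilde M\to\TRM$, which sends $e = \mathbf{1}_P + (\alpha_\TRM, \alpha_{\widetilde M})$ to $s(\pi(e))^{-1}\cdot e$ (where $\pi$ is the augmentation and $s$ its section), landing in the kernel $\mathbf{1}_P + (0, \alpha')\subset \mathbf{1}_P + I_P(\TRM\ltimes \widetilde M)$. A direct computation using the composition rule (\ref{composition}) applied to the semidirect-product ring gives $\alpha' = (1+\alpha_\TRM)^{-1}\cdot\alpha_{\widetilde M}$ under the left $\TRM$-action on $\widetilde M$. Since $(1+\alpha_\TRM)^{-1} = \mathbf{1}_P - m$ in the group, the telescoping identity $m\cdot m^{(l)} = m^{(l+1)}$ (left-action prepends a tensor factor) yields
$\alpha' = (\mathbf{1}_P - m)\cdot(1-m)^{-1}\, n\,(1-m)^{-1} = n\cdot (1-m)^{-1},$
all terms with a nontrivial left $\TRM$-factor cancelling telescopically.

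Finally, under the adjunction $\Hom_R(P, P\otimes_R\widetilde M)\cong\Hom_{\TRM}(Q,Q\otimes_{\TRM}\widetilde M)$ for $Q = P\otimes_R\TRM$, the map $n\cdot(1-m)^{-1}$ corresponds to the $\TRM$-linear composite $\iota_n\circ(\mathbf{1}_Q - m)^{-1}\colon Q\to Q\to Q\otimes_{\TRM}\widetilde M$, where $(\mathbf{1}_Q - m)^{-1}$ is the $\TRM$-linear endomorphism of $Q$ extending $\mathbf{1}_P - m$, and $\iota_n\colon Q\to Q\otimes_{\TRM}\widetilde M$ is the $\TRM$-linear insertion of $n$ into the middle $N$-slot of $\widetilde M$ with trivial left $\TRM$-slot. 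This is precisely the element the lemma denotes $(\mathbf{1}_{P\otimes_R\TRM} - m)^{-1}\otimes n$. The main obstacle is bookkeeping the left/right $\TRM$-actions in the retraction step and verifying the telescoping cancellation that absorbs exactly one of the two $(1-m)^{-1}$ factors appearing in $\alpha_{\widetilde M}$.
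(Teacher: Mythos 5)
Your proposal is correct and follows essentially the same route as the paper's proof: track $\Sigma(m,n)$ through $\phi$ to get $\sum_i (m+n)^{\otimes i}$, apply $\Psi_*$ to kill all words with two or more $n$'s, factor the resulting element of $\mathbf{1}_P+I_P(\TRM\ltimes(\TRM\otimes^{\wedge}_R N\otimes^{\wedge}_R\TRM))$ as (an element of $\mathbf{1}_P+I_P(\TRM)$) times (a kernel element), which absorbs one of the two $(\mathbf{1}_P-m)^{-1}$ factors, and then extend scalars to $Q=P\otimes_R\TRM$. The only cosmetic difference is that you implement the passage to $\tilde K_{\TRM}$ by the explicit group-theoretic retraction $e\mapsto s(\pi(e))^{-1}e$, whereas the paper phrases the identical factorization homotopically (after collapsing $BH\subset BG$ the $1$-simplex of $hg$ is homotopic to that of $g$ via the $2$-simplex $(h,g)$), and the paper additionally spells out the \cite{DMcC1} identification $\alpha\leftrightarrow\mathbf{1}_Q+\alpha$ that you leave implicit.
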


\begin{lemma}
\label{scalars}
Let $R$ be a ring spectrum, and $S$ an $R$-algebra.  Let $X$ be an $S-R$ bimodule.  Then
there is a homotopy equivalence
$$\THH(R;X)\stackrel{\simeq}{\to} \THH(S;X\otimes^\wedge_R S).$$
\end{lemma}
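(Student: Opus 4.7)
The plan is to exhibit both $\THH(R;X)$ and $\THH(S; X\otimes^\wedge_R S)$ as the two iterated realizations of a single bisimplicial spectrum, and then conclude by Fubini. Consider
$$Y_{m,n} \;=\; X\otimes R^{\otimes m}\otimes S^{\otimes (n+1)},$$
equipped with $m$-direction structure from the two-sided bar construction $B(X,R,S)\otimes S^{\otimes n}$ (so $d_0^m$ uses the right $R$-action on $X$, interior $d_i^m$ multiply adjacent copies of $R$, and $d_m^m$ sends the last $R$ into $S$ via the unit $f:R\to S$ and multiplies it onto the bridge $S$), and $n$-direction structure from the cyclic bar construction $B^{cyc}(S;\, X\otimes R^{\otimes m}\otimes S)$ (so $d_0^n,\ldots,d_{n-1}^n$ multiply adjacent copies of $S$, and $d_n^n$ cyclically rotates the last $S$ to act on $X$ via the left $S$-action). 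A direct check shows these two families of face and degeneracy maps commute, so $Y_{\bullet,\bullet}$ really is bisimplicial.

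Realizing first in $m$, the $m$-simplicial object at each $n$ is $B(X,R,S)\otimes S^{\otimes n}$, whose realization is $(X\otimes^\wedge_R S)\otimes S^{\otimes n}$ because $|B(X,R,S)|\simeq X\otimes^\wedge_R S$. The residual $n$-simplicial structure is exactly $B^{cyc}(S;\, X\otimes^\wedge_R S)$, so the iterated realization is $\THH(S;\, X\otimes^\wedge_R S)$. Realizing first in $n$, at each $m$ we have $B^{cyc}(S;\, X\otimes R^{\otimes m}\otimes S)$. The coefficient bimodule is induced from the left $S$-module $N':=X\otimes R^{\otimes m}$, and using the Morita identification $N'\otimes S\simeq N'\otimes^\wedge_S S^e$ of right $S^e$-modules we obtain
$$\THH(S;\, N'\otimes S) \;\simeq\; (N'\otimes^\wedge_S S^e)\otimes^\wedge_{S^e} S \;\simeq\; N'\otimes^\wedge_S S \;\simeq\; N'.$$
Hence the $n$-realization at each $m$ is $X\otimes R^{\otimes m}$.

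It then remains to track the residual $m$-simplicial structure: $d_0^m$ and the interior faces persist unchanged, while $d_m^m$ — which had multiplied $f(r_m)$ onto the bridge $S$ — becomes, after the Morita collapse, the left $S$-action of $f(r_m)$ on $N'$, equivalently the left $R$-action of $r_m$ on $X$. This is exactly the cyclic face of $B^{cyc}(R;X)$, so the iterated realization is $\THH(R;X)$. Fubini for iterated realizations of bisimplicial spectra then produces the desired equivalence. The main technical point, and what I expect to be the chief obstacle, is the bookkeeping in the previous sentence: verifying that under the Morita collapse $\THH(S;\, N'\otimes S)\simeq N'$ the face $d_m^m$ of the bar direction is carried exactly to the cyclic face of $B^{cyc}(R;X)$, so that the resulting equivalence is naturally identified with the one induced by the simplicial map $x\otimes r_1\otimes\cdots\otimes r_n \mapsto (x\otimes_R 1_S)\otimes f(r_1)\otimes\cdots\otimes f(r_n)$ needed in the proof of Theorem \ref{mainone}.
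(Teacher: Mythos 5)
Your proof is correct and is essentially the paper's own argument: the paper uses the same bisimplicial spectrum $(p,q)\mapsto S^{\wedge p}\wedge X\wedge R^{\wedge q}\wedge S$ (your $Y_{m,n}$ up to a cyclic reordering of the factors and the identification $m=q$, $n=p$) and concludes by Fubini, realizing in the two directions in either order. The only cosmetic difference is that you package the collapse of the $S$-bar direction as the Morita computation $\THH(S;N'\otimes S)\simeq N'$, whereas the paper phrases the same collapse as the two-sided bar construction realizing to $\THH(R;S\otimes^\wedge_S X)\simeq\THH(R;X)$.
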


\begin{lemma}
\label{inducedmap}
Let $R$ be a simplicial ring, $S$ a simplicial $R$-algebra., and  $X$ a simplicial $S-R$ bimodule.  Then
if we construct $\THH(R;M)$ for an $R$-bimodule $M$ via the Waldhausen S-construction
$$\{ n\mapsto \oplus_{P\in S^{(n)}\PR}\Hom_{S^{(n)}\mR}(P, P\otimes_R M)\},$$
the isomorphism 
$$\THH(R;X)\simeq\THH(R;S\otimes^\wedge_R X)$$
of Lemma \ref{scalars} for the associated Eilenberg Mac Lane spectra is induced by the map
$$
\oplus_{P\in S^{(n)}\PR}\!\Hom_{S^{(n)}\mR}(P, P\otimes_R X)
\!\!\to\!
\oplus_{Q\in S^{(n)}\PS}\!\Hom_{S^{(n)}\MS}(Q, Q\otimes_S (X\otimes^\wedge_R S))$$
sending
$$\alpha:\ P\to P\otimes_R X$$
to
$$\alpha\otimes^\wedge_R \mathbf{1}_S: P\otimes^ \wedge_RS=P\otimes_R S\to P\otimes_R S\otimes_SX\otimes^\wedge_R S\simeq P\otimes_R X\otimes^\wedge_R S.$$
\end{lemma}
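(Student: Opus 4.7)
The strategy is to identify the described map with the equivalence of Lemma \ref{scalars} by factoring through Dennis-type trace equivalences from the Waldhausen S-construction models to the cyclic bar construction models of THH, and then verifying commutativity on the level of cyclic bar constructions, where the map is transparent.

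First I would give an explicit cyclic bar construction model for the equivalence of Lemma \ref{scalars}. In simplicial degree $n$ the map
$$\THH(R;X)_n = X\otimes R^{\otimes n}\to (X\otimes^\wedge_R S)\otimes S^{\otimes n} = \THH(S;X\otimes^\wedge_R S)_n$$
is given by $x\otimes r_1\otimes\cdots\otimes r_n \mapsto (x\otimes 1_S)\otimes \iota(r_1)\otimes\cdots\otimes \iota(r_n)$, where $\iota:R\to S$ is the unit. This is manifestly a map of cyclic simplicial abelian groups (compatibility with face, degeneracy and cyclic operators is immediate from $\iota$ being a ring map and from $X$ being an $S$-$R$ bimodule), on $0$-simplices it is the canonical map $x\mapsto x\otimes 1_S$, and it realizes to the equivalence of Lemma \ref{scalars}.

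Next I would use that the Waldhausen S-construction model receives a Dennis-type trace equivalence to the cyclic bar construction: for $P\in\PR$ presented as a summand of $R^n$ via a projector $e\in M_n(R)$, an element $\alpha\in\Hom_R(P,P\otimes_R X)$ lifts to a matrix $(\alpha_{ij})\in M_n(X)$ with $e\alpha e=\alpha$, and its trace is $\sum_i\alpha_{ii}\in X$, viewed as a $0$-simplex of the cyclic bar construction. The plan is to show the square
\[
\xymatrix{
\Hom_R(P,P\otimes_R X) \ar[r]\ar[d]_{\tr} & \Hom_S(P\otimes_R S,(P\otimes_R S)\otimes_S(X\otimes^\wedge_R S))\ar[d]^{\tr} \\
X \ar[r] & X\otimes^\wedge_R S
}
\]
commutes, where the top map is the one from the statement and the bottom map is the $0$-simplex component of the cyclic bar construction map above. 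By additivity of the trace this reduces to the free case $P=R^n$, for which $P\otimes_R S=S^n$ is presented by $e\otimes\mathbf{1}_S$, the image $\alpha\otimes^\wedge_R\mathbf{1}_S$ is the matrix $(\alpha_{ij}\otimes 1_S)\in M_n(X\otimes^\wedge_R S)$, and its trace is $\sum_i\alpha_{ii}\otimes 1_S$, which agrees with the image of $\sum_i\alpha_{ii}$ under $x\mapsto x\otimes 1_S$.

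The main obstacle is bookkeeping: tracking the indexing of the iterated $S^{(n)}$-construction and checking that the Dennis trace on each summand is natural with respect to the functor $P\mapsto P\otimes_R S$ on objects of $S^{(n)}\mathcal{P}_R$, and that commutativity is compatible with the \emph{cyclic} (not merely simplicial) structure so the identification on realizations follows. The underlying computation is the elementary matrix identity $\tr(M\otimes\mathbf{1}_S)=\tr(M)\otimes 1_S$; once the diagrammatic compatibility is in place, the lemma follows by two-out-of-three applied to the commutative square of Dennis trace equivalences and the cyclic bar construction map realizing Lemma \ref{scalars}.
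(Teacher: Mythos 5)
Your core computation---that the matrix trace satisfies $\tr(\alpha\otimes\mathbf{1}_S)=\tr(\alpha)\otimes 1_S$---is correct, but the proposal has a genuine gap at both of its hinges. First, the equivalence of Lemma \ref{scalars} is not given as a direct map of Hochschild complexes: it is the zig-zag obtained by realizing the bisimplicial object $(p,q)\mapsto S^{\wedge p}\wedge X\wedge R^{\wedge q}\wedge S$ in the two possible orders. Your degreewise formula $x\otimes r_1\otimes\cdots\otimes r_n\mapsto (x\otimes 1_S)\otimes\iota(r_1)\otimes\cdots\otimes\iota(r_n)$ is a simplicial map (note there is no cyclic structure to respect here, since the coefficients form a bimodule rather than the ring itself), but the assertion that it ``realizes to the equivalence of Lemma \ref{scalars}'' is precisely the point at issue, and verifying it requires exhibiting elements of the bisimplicial object whose two edge images are the two things being compared. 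That is exactly what the paper's proof does, working in the categorical model of \cite{DMcC2}: it places $(\alpha,\mathbf{1}_Q)$, with $Q=P\otimes_R S$, in $\Hom_R(P,Q\otimes_S X)\wedge\Hom_S(Q,P\otimes_R S)$ at bidegree $(0,0)$ and observes that one composition gives $\alpha$ while the other gives $\alpha\otimes^\wedge_R\mathbf{1}_S$. By rerouting through the ring-level cyclic bar construction you have pushed the real content of the lemma into an unproved claim.

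Second, your vertical ``Dennis trace equivalences'' are only specified on the displayed pieces: a choice of projector $e$ with $P=eR^n$ and the trace $\sum_i\alpha_{ii}$ lands you in the zero-simplices of the Hochschild complex, and in a single wedge summand of a single stage of the $S^{(n)}$-construction. To invoke two-out-of-three you need these traces as maps (or zig-zags) of spectra compatible with the full simplicial structure in both the Hochschild direction and the $S$-construction direction, and you need the square to commute as a diagram of spectra, not merely on those pieces. This is standard Morita-invariance material, but it is the substance of the argument rather than ``bookkeeping,'' and as written the proposal does not supply it. The paper's route avoids both issues by never leaving the $\Hom$-space model, so that the comparison is a direct inspection of where one element goes under two maps.
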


\bigskip
Using these lemmas, we will be able to complete our proof of Theorem \ref{mainone}.
 The \cite{DMcC1} map
\begin{multline}
\label{linearization}
\tilde K(\TRM; B.(\TRM\otimes^{\wedge}_R
N \otimes^{\wedge}_R\TRM)) 
\\ \stackrel{2n}{\to}
\THH(\TRM; B.(\TRM\otimes^{\wedge}_R
N \otimes^{\wedge}_R\TRM))
\end{multline}
is obtained simply by passing from $\bigvee$ to $\bigoplus$,
\begin{multline*}
\bigvee_{Q\in\mathcal{R}_{\TRM}}\Hom_{\TRM}(Q, Q\otimes_{\TRM} B.( \TRM\otimes^{\wedge}_R
N \otimes^{\wedge}_R\TRM))
\\
\to
\bigoplus_{Q\in\mathcal{R}_{\TRM}}\Hom_{\TRM}(Q, Q\otimes_{\TRM} B.( \TRM\otimes^{\wedge}_R
N \otimes^{\wedge}_R\TRM)).
\end{multline*}
So by Lemma \ref{longwayround} above, the composition of $\Psi_*\circ\phi$ with the \cite{DMcC1} linearization map (\ref{linearization}) sends the loop represented by the $1$-simplex $\Sigma(m,n)$ in the $P$ summand to something homotopic to the loop represented by the $1$-simplex $(\mathbf{1}_{P\otimes _R \TRM}-m)^{-1}\otimes n$ in the $P\otimes _R \TRM$ summand.
 
\medskip 
 Alternatively, if we look at the map onto the cofiber on the top row of equation (\ref{maineqn}) and then collapse to a point the terms with more than one $N$,
 \begin{multline}
 \label{otherway}
 \tilde K(R;M\oplus N)
 \\
 \to\tilde K (R:N)\vee
 \bigvee_{a=2}^\infty\   \ \bigvee_{\{f:\ \{1,\ldots,a\}\to\{ M,N\}\ \mathrm{non\ periodic}\} /C_a} \tilde K(R;f(1)\otimes^\wedge_R\cdots\otimes^\wedge_Rf(a))
 \\
 \stackrel{2n}{\to}
 \bigvee_{a=0}^\infty \tilde K(R;M^{\otimes^\wedge_R a}\otimes^\wedge_R N)
\simeq
 \prod_{a=0}^\infty \tilde K(R;M^{\otimes^\wedge_R a}\otimes^\wedge_R N)
 \end{multline}
 it is induced by the stabilization of the product of the maps
\begin{multline*}
\bigvee_{P\in\PR}
(\Hom_R(P, P\otimes M)\oplus \Hom_R(P, P\otimes N)) \to
\bigvee_{P\in\PR}
\Hom_R(P, P\otimes M^{\otimes^\wedge_R a}\otimes^\wedge_R N)\\
\end{multline*}
sending
$$(m,n)\mapsto m^{\otimes a}\otimes n.
$$
To see this, we use the fact that $M$ is connected and $N$ is $n$-connected for $n$ which we may assume to be at least $1$, and therefore $M\oplus N$ is connected as well.
Then
\begin{multline}
\label{decomp}
\tilde K(R;M\oplus N)
\stackrel{\simeq}{\to}
W(R;M\oplus N)
\\
\simeq 
\prod_{a=1}^\infty\   \ \bigvee_{\{f:\ \{1,\ldots,a\}\to\{ M,N\}\ \mathrm{non\ periodic}\} /C_a} \!\!W(R;f(1)\otimes^\wedge_R\cdots\otimes^\wedge_Rf(a))
\\
\stackrel{2n}{\to}
W(R;M)\times\prod_{a=0}^\infty
 W(R; M^{\otimes^\wedge_R a}\otimes^\wedge_R N)
 \stackrel{\simeq}{\leftarrow} 
 \tilde K(R;M)\times\prod_{a=0}^\infty
 \tilde K(R; M^{\otimes^\wedge_R a}\otimes^\wedge_R N)
 \end{multline}
 Here the first and last equivalences are by Theorem 9.2 in \cite{LMcC1}, the second one is the splitting of Theorem 2.2 in \cite{LMcC2}, and the third map is $2n$-connected because if $f$ hits $N$ more than once, $f(1)\otimes^\wedge_R\cdots\otimes^\wedge_Rf(a)$ and therefore also $W$ of $R$ with coefficients in it are $2n$-connected.  
 
 We are, of course, quotienting this whole picture out by $\tilde K(R;M)$.  By following the decomposition of Theorem 2.2 in \cite{LMcC2} on the $0$-dimensional part (as we did before), we see that
 $$(m,n)\in \Hom_R(P, P\otimes M)\oplus \Hom_R(P, P\otimes N)$$
 in $\tilde K(R;M\oplus N)$ in the beginning of equation (\ref{decomp}) lands in the same place 
 in
 $W(R;M)\times\prod_{a=0}^\infty
 W(R; M^{\otimes^\wedge_R a}\otimes^\wedge_R N)$
 as $\{m\}\times\prod_{a=0}^\infty \{ m^{\otimes a}\otimes n\}$ in $\tilde K(R;M)\times\prod_{a=0}^\infty
 \tilde K(R; M^{\otimes^\wedge_R a}\otimes^\wedge_R N)$ on the right.
 Since the spectra we are looking at increase in connectivity, we know that their sum $\bigvee$ is homotopy equivalent to their product.
 
 In (\ref{maineqn}), we are using the suspension of the map of (\ref{otherway}),
 $$\Sigma \tilde K(R;M\oplus N) \to \Sigma \bigvee_{a=0}^\infty
 \tilde K(R; M^{\otimes^\wedge_R a}\otimes^\wedge_R N)$$ and want to compose it with
 \begin{multline*}
\Sigma \bigvee_{a=0}^\infty
 \tilde K(R; M^{\otimes^\wedge_R a}\otimes^\wedge_R N) \stackrel{2n+1}{\longrightarrow}
 \Sigma \bigvee_{a=0}^\infty
 \THH(R; M^{\otimes^\wedge_R a}\otimes^\wedge_R N)\\
 \simeq \THH(R;\Sigma(\bigoplus_{a=0}^\infty M^{\otimes^\wedge_R a}\otimes^\wedge_R N)
 \simeq\THH(R;\Sigma \TRM\otimes^\wedge_R N),
 \end{multline*}
 where the first map is that of \cite{DMcC1}, and the second uses the linearity of $\THH$ in the bimodule coordinate.
 
 So the suspension of $(m,n)\in  \Hom_R(P, P\otimes M)\oplus \Hom_R(P, P\otimes N)$ lands in the $1$-simplex corresponding to
 $$\sum_{a=0}^\infty m^{\otimes a}\otimes n= (\mathbf{1}_{P}-m)^{-1}\otimes n\in\Hom_R(P, P\otimes _R \TRM\otimes^\wedge_R N).$$
 When we went the $\Psi_*\circ \phi $ route, instead of getting
 $$(\mathbf{1}_{P}-m)^{-1}\otimes n\in\Hom_R(P, P\otimes _R \TRM\otimes^\wedge_R N)\subset\THH(R;  \TRM\otimes^\wedge_R N)$$
 we got
 \begin{multline*}
 (\mathbf{1}_{P\otimes _R \TRM}-m)^{-1}\otimes n
 \\
 \in\Hom_{\TRM}(P\otimes_R\TRM, P\otimes _R \TRM
  \otimes_{\TRM}
  \TRM\otimes^\wedge_R N\otimes^\wedge_R\TRM)
  \\
  \subset\THH(\TRM;  \TRM\otimes^\wedge_R N\otimes^\wedge_R\TRM).
  \end{multline*}
  But by Lemmas \ref{scalars} and \ref{inducedmap} above, that is exactly what we need to assure ourselves that up to homotopy, the map $\beta\circ\alpha$ is the $2n$-equivalence we were after. 
\end{proof}

%\newpage{}
\section{Proofs of the Technical Lemmas}

\noindent{\bf Lemma \ref{longwayround}} \begin{proof}
 The discussion will be done over $\PR$, i.e. in the first stage of the Waldhausen S-construction, but can be carried over to $S^{(n)}\PR$ for any $n$.

The map
$$\Sigma\tilde K(R;M\oplus N)
\stackrel{\phi}{\to}
\tilde K(\mathcal{ T}_R(M\oplus N))$$
was induced by stabilizing the map 
\begin{equation*}
\xymatrix{
\Sigma
\ar[d]^{\simeq} 
 \bigvee_{P\in\PR}\Hom_R
(P, P\otimes_R(M\oplus N))
\ar[r]^{} &
\bigvee_{P\in\PR}B.(\mathbf{1}_P+I_P(\mathcal{ T}_R(M\oplus N)))
\\
\Sigma
\bigvee_{P\in\PR}
(\Hom_R(P, P\otimes M)\oplus \Hom_R(P, P\otimes N))&
\\}
\end{equation*}
sending $\Sigma(m,n)$ to the $1$-simplex
$$\mathbf{1}_P+ (m+n)+(m+n)^{\otimes 2} + (m+n)^{\otimes 3} +\cdots\in B_1(\mathbf{1}_P+I_P(\mathcal{ T}_R(M\oplus N))).$$
Now $\Psi_*: \ \tilde K(\mathcal{ T}_R(M\oplus N))\to\tilde K(
 \TRM\ltimes(\TRM\otimes^{\wedge}_R
N \otimes^{\wedge}_R\TRM))$ is induced by
\begin{multline*}
\Psi_*: \ \bigvee_{P\in\PR}B.(\mathbf{1}_P+I_P(\mathcal{ T}_R(M\oplus N)))
\\ \to
\bigvee_{P\in\PR}B.(\mathbf{1}_P+I_P( \TRM\ltimes(\TRM\otimes^{\wedge}_R
N \otimes^{\wedge}_R\TRM)))
\end{multline*}
so the original $1$-simplex $\Sigma(m,n) $ will be further sent to the $1$-simplex
\begin{multline*}
\mathbf{1}_P+ \sum_{i=1}^\infty
m^{\otimes i} + \sum_{j,k=0}^\infty
m^{\otimes j} \otimes n\otimes m^{\otimes k}\\
=(\mathbf{1}_P-m)^{-1}
+ (\mathbf{1}_P-m)^{-1}\otimes n\otimes (\mathbf{1}_P-m)^{-1}.
\end{multline*}

\medskip
Until now, we have looked at K-theory of $R$ algebras reduced over $R$, which we can emphasize by writing $\tilde K_R(\ )$.  To get $\tilde K_{\TRM}(\TRM\ltimes(\TRM\otimes^{\wedge}_R
N \otimes^{\wedge}_R\TRM))$, we look at the homotopy fiber of the map
$$\tilde K_R(\TRM\ltimes(\TRM\otimes^{\wedge}_R
N \otimes^{\wedge}_R\TRM))\rightarrow \tilde K_R(\TRM).$$ 
Since $ \tilde K_R(\TRM)$ is a direct summand in $\tilde K_R(\TRM\ltimes(\TRM\otimes^{\wedge}_R
N \otimes^{\wedge}_R\TRM))$, so is the homotopy fiber of the map, and when we map $\tilde K_R(\TRM\ltimes(\TRM\otimes^{\wedge}_R
N \otimes^{\wedge}_R\TRM))$ down to the homotopy fiber, the image of  $ \tilde K_R(\TRM)$
is identified to a point.  So on the pre-stabilized version,  $\bigvee_{P\in\PR}B.(\mathbf{1}_P+I_P( \TRM\ltimes(\TRM\otimes^{\wedge}_R
N \otimes^{\wedge}_R\TRM)))$, we know that  anything coming from  $ \tilde K_R(\TRM)$, that is
$ \bigvee_{P\in\PR}B.(\mathbf{1}_P+I_P(\mathcal{ T}_R(M)))$, has to be identified to a point in the homotopy fiber.

But note that if we have a subgroup $H\subset G$ and collapse the subspace $BH\subset BG$ to a point, in the quotient space $BG/BH$, for any $h\in H$, $g\in G$ the $1$-simplex corresponding to $hg$ is homotopic via the $2$-simplex $(h,g)$ (one of whose edges has been collapsed to a point) to the $1$-simplex corresponding to $g$.

Since 
\begin{multline*}
\tilde K_R(\TRM\ltimes(\TRM\otimes^{\wedge}_R
N \otimes^{\wedge}_R\TRM)) \\
\simeq
\tilde K_R(\TRM)\times \tilde K_{\TRM}(\TRM\ltimes(\TRM\otimes^{\wedge}_R
N \otimes^{\wedge}_R\TRM)),
\end{multline*}
when we pass from $\tilde K_R$ to $\tilde K_{\TRM}$, we are identifying $\tilde K_R(\TRM)$ to a point, and so at each level of the stabilization, the image of our original $1$-simplex $\Sigma(m,n)$ will be homotopic (rel endpoints) to the image of the $1$-simplex
$$\mathbf{1}_P+ (\mathbf{1}_P-m)^{-1}\otimes n \in B_1(\mathbf{1}_P+I_P( \TRM\ltimes(\TRM\otimes^{\wedge}_R
N \otimes^{\wedge}_R\TRM))).$$

\medskip
Now when $\tilde K_{\TRM}(\TRM\ltimes(\TRM\otimes^{\wedge}_R
N \otimes^{\wedge}_R\TRM)$ is represented as the stabilization of
$$\bigvee_{Q\in\mathcal{P}_{\TRM}}B.(\mathbf{1}_Q+I_Q( \TRM\ltimes(\TRM\otimes^{\wedge}_R
N \otimes^{\wedge}_R\TRM))),
$$
(note that the augmentation ideal here refers now to augmentation over $\TRM$), then $\mathbf{1}_P+ (\mathbf{1}_P-m)^{-1}\otimes n$ 
should be viewed there not as an $R$-linear map
$$P\to P\otimes_R (\TRM\ltimes(\TRM\otimes^{\wedge}_R
N \otimes^{\wedge}_R\TRM))$$
but as its $\TRM$-linear extension to
\begin{multline*}
P\otimes_R\TRM\to (P\otimes_R\TRM)\otimes_{\TRM}
(\TRM\ltimes(\TRM\otimes^{\wedge}_R
N \otimes^{\wedge}_R\TRM))
\\
\cong P\otimes_R (\TRM\ltimes(\TRM\otimes^{\wedge}_R
N \otimes^{\wedge}_R\TRM)).
\end{multline*}
(Extending maps $\TRM$-linearly
gives an isomorphism$$\Hom_R(P,  P\otimes_R S)\leftrightarrow
\Hom_{\TRM}(P\otimes_R \TRM, (P\otimes_R\TRM)\otimes_{\TRM} S).\ )$$

So now our original simplex $\Sigma(m,n)$ maps to the $1$-simplex
\begin{multline*}
\mathbf{1}_{P\otimes _R \TRM}+ (\mathbf{1}_{P\otimes _R \TRM}-m)^{-1}\otimes n \\
\in B_1(\mathbf{1}_{P\otimes _R \TRM}+I_{P\otimes _R \TRM}( \TRM\ltimes(\TRM\otimes^{\wedge}_R
N \otimes^{\wedge}_R\TRM)))
\end{multline*}
in the 
$\bigvee_{Q\in\mathcal{R}_{\TRM}}B.(\mathbf{1}_Q+I_Q( \TRM\ltimes(\TRM\otimes^{\wedge}_R
N \otimes^{\wedge}_R\TRM)))$ model of 
$\tilde K_{\TRM}(\TRM\ltimes(\TRM\otimes^{\wedge}_R
N \otimes^{\wedge}_R\TRM)$ which admits a homotopy equivalence
\begin{multline*}
\tilde K(\TRM; B.(\TRM\otimes^{\wedge}_R
N \otimes^{\wedge}_R\TRM))
\\
\stackrel{\simeq}{\to}
\tilde K_{\TRM}(\TRM\ltimes(\TRM\otimes^{\wedge}_R
N \otimes^{\wedge}_R\TRM))
\end{multline*}
described in section 4 of \cite{DMcC1}. Following the description there, this homotopy equivalence is the stabilization of a given map
\begin{multline*}
\bigvee_{Q\in\mathcal{P}_{\TRM}}\Hom_{\TRM}(Q, Q\otimes_{\TRM} B.( \TRM\otimes^{\wedge}_R
N \otimes^{\wedge}_R\TRM))
\\
\to \bigvee_{Q\in\mathcal{P}_{\TRM}}B.(\mathbf{1}_Q+I_Q( \TRM\ltimes(\TRM\otimes^{\wedge}_R
N \otimes^{\wedge}_R\TRM)))
\end{multline*}
which sends the $1$-simplex corresponding to 
\begin{multline*}
\alpha:\ Q\to Q\otimes_{\TRM} ( \TRM\otimes^{\wedge}_R
N \otimes^{\wedge}_R\TRM)
\\
=Q\otimes_{\TRM} B_1( \TRM\otimes^{\wedge}_R
N \otimes^{\wedge}_R\TRM)
\end{multline*}
to
\begin{multline*}
\mathbf{1}_Q+\alpha\in \mathbf{1}_Q+I_Q( \TRM\ltimes(\TRM\otimes^{\wedge}_R
N \otimes^{\wedge}_R\TRM)
\\
=B_1(\mathbf{1}_Q+I_Q( \TRM\ltimes(\TRM\otimes^{\wedge}_R
N \otimes^{\wedge}_R\TRM)).
\end{multline*}
Therefore the loop corresponding to $\mathbf{1}_{P\otimes _R \TRM}+ (\mathbf{1}_{P\otimes _R \TRM}-m)^{-1}\otimes n $ in
$\tilde K_{\TRM}(\TRM\ltimes(\TRM\otimes^{\wedge}_R
N \otimes^{\wedge}_R\TRM))$ comes from
$$(\mathbf{1}_{P\otimes _R \TRM}-m)^{-1}\otimes n:\ P\otimes _R \TRM\to \TRM\otimes^{\wedge}_R N\otimes^\wedge_R\TRM.$$
\end{proof}

\noindent{\bf Lemma \ref{scalars}} \begin{proof}
We look at the bisimplicial spectrum
$$(p,q)\mapsto S^{\wedge p}\wedge X\wedge R^{\wedge q}\wedge S$$
with the usual Hochschild-type face and degeneracy maps in both simplicial dimensions.
Realizing first in the $p$-direction, we get that the realization of this bisimplicial set is 
$$\THH(R;S\otimes^\wedge_S X)\simeq\THH(R;X);$$
realizing first in the $q$-direction, we get that the realization is $$\THH(S;X\otimes^\wedge_R S).$$
\end{proof}

\noindent{\bf Lemma \ref{inducedmap}} \begin{proof}
We will use the methods of \cite{DMcC2}: we can model the product
$\u S^{\wedge p}\wedge\u  X\wedge \u R^{\wedge q}\wedge \u S$ of the Eilenberg Mac Lane spectra associated to the simplicial rings and modules by
\begin{multline}
\hocolim_{\stackrel{\to}{\u X}} \Map 
(S^{\sqcup \u X}, 
\bigvee_{\u A }
\Hom_S(Q_1, Q_0)[S^{X^1_0}]
\wedge\cdots\wedge
\Hom_S(Q_p, Q_{p-1})[S^{X^1_{p-1}}]   \\
\wedge\Hom_R(P_0, Q_p\otimes_S X)[S^{X^1_p}]\wedge
\Hom_R(P_1, P_0)[S^{X^2_0}]
\wedge\cdots  \\
\wedge
\Hom_R(P_q, P_{q-1})[S^{X^2_{q-1}}]
\wedge\Hom_S(Q_0, P_q\otimes_R S)[S^{X^2_q}])
\end{multline}
where $\u X=(X^1_0,\ldots,X^1_p,X^2_0,\ldots,X^2_q)$ is a collection of finite sets and where 
$$\u A=(Q_0,\ldots, Q_p, P_0,\ldots, P_q),\ \ Q_i\in\PS,\ P_i\in\PR.$$
Boundary maps in this model come from the composition of maps, smashed with identity maps of a bimodule as needed, and the smashing together of spheres.

For the elements we need to represent, we can take $p=q=0$ and $X^i_j=\emptyset\ \forall i,j$,
and look at elements in
$$\Hom_R(P,Q\otimes_S X)\wedge \Hom_S(Q,P\otimes_R S)$$
for $P\in\PR$ and $Q\in\PS$.

Given a $P\in\PR$ and an $R$-linear map $\alpha:\ P\to P\otimes_R X$ (where the $S-R$-bimodule $X$ is viewed as a left $R$-module through the unit map $R\to S$), we look at $Q=P\otimes_R S\in \PS$.
Since
$$Q\otimes_S X=(P\otimes_R S)\otimes_S X\cong P\otimes_R X,$$
$\alpha$ can be viewed as an element of $\Hom_R(P,Q\otimes_S X)$.  Consider
$$(\alpha,\mathbf{1}_Q)\in \Hom_R(P,Q\otimes_S X)\wedge \Hom_S(Q,P\otimes_R S).$$
If we map $\u S^{\wedge p}\wedge\u  X\wedge \u R^{\wedge q}\wedge \u S\to \THH(R; X\otimes^\wedge_S S)$, then $(\alpha,\mathbf{1}_Q)$ will be identified with the composition
$$P\stackrel{\alpha}{\to} Q\otimes_S X\stackrel{ \mathbf{1}_Q \otimes_S  \mathbf{1}_X}{\longlongrightarrow} (P\otimes_R S)\otimes_S X\cong P\otimes_R X
$$
that is, with $\alpha\in\Hom_R(P,P\otimes_R X)$.  But if we map 
$\u S^{\wedge p}\wedge\u  X\wedge \u R^{\wedge q}\wedge \u S\to \THH(S; X\otimes^\wedge_R S)$,
$(\alpha,\mathbf{1}_Q)$ will be identified with the composition
$$Q\stackrel{\mathbf{1}_Q}{\to} P\otimes_R S 
\stackrel{ \alpha \otimes_R \mathbf{1}_S}{\longlongrightarrow}
(Q\otimes_S X)\otimes_R S$$
which is the map we called 
$$\alpha\otimes^\wedge_R \mathbf{1}_S\in\Hom_S(P\otimes^\wedge_R S, P\otimes^\wedge_R X\otimes^\wedge_RS).$$  
This argument holds  for any stage $n$ in the Waldhausen S-construction.
\end{proof}

\bigskip
\end{document}